\newcommand{\bydef}{:=}
\newcommand{\id}{\mathrm{id}}
\newcommand{\ex}{\mathrm{ex}}
\newcommand{\cA}{\mathcal{A}}
\newcommand{\cB}{\mathcal{B}}
\newcommand{\cH}{\mathcal{H}}
\newcommand{\cK}{\mathcal{K}}
\newcommand{\cM}{\mathcal{M}}
\newcommand{\cS}{\mathcal{S}}
\DeclareMathOperator{\CD}{\mathfrak{CD}}
\newcommand{\ZZ}{\mathbb{Z}}
\newcommand{\FF}{\mathbb{F}} 
\DeclareMathOperator{\AlgF}{\mathrm{Alg_{\FF}}}
\DeclareMathOperator{\Aut}{\mathrm{Aut}}
\DeclareMathOperator{\Der}{\mathrm{Der}}
\newcommand{\subo}{\bar 0} 
\newcommand{\subuno}{\bar 1}
\newcommand{\invol}{\,-\,}
\newtheorem{theorem}{Theorem}[section]
\newtheorem{proposition}[theorem]{Proposition}
\newtheorem{lemma}[theorem]{Lemma}
\theoremstyle{definition} 
\newtheorem{example}[theorem]{Example}
\theoremstyle{remark} 
\newtheorem{remark}[theorem]{Remark}
\numberwithin{equation}{section}
\begin{document}
	
	\title{Automorphisms, derivations and gradings of the split quartic cayley algebra.}

		\author[V.~Blasco]{Victor ~Blasco}
			\thanks{}
		
			\address{Departamento de
			Matem\'{a}ticas e Instituto Universitario de Matem\'aticas y
			Aplicaciones, Universidad de Zaragoza, 50009 Zaragoza, Spain}
		\email{victorblasco98@gmail.com} 

		\author[A.~Daza-Garcia]{Alberto
		~Daza-Garcia} 
		\thanks{The second author acknowledges support by the F.P.I. grant PRE2018-087018. He also aknowledges support by grant MTM2017-83506-C2-1-P (AEI/FEDER, UE) and by grant PID2021-123461NB-C21, funded by MCIN/AEI/10.13039/501100011033 and by "ERDF A way of making Europe".}
	\address{Departamento de
		Matem\'{a}ticas e Instituto Universitario de Matem\'aticas y
		Aplicaciones, Universidad de Zaragoza, 50009 Zaragoza, Spain}
	\email{albertodg@unizar.es}

	\subjclass[2020]{Primary 17A01; Secondary 17A30; 17A36}
	
	\keywords{Structurable algebras; automorphisms; gradings}
	
	
	\begin{abstract} The split quartic Cayley algebra is a structurable algebra which has been used to give constructions of Lie algebras of type D4. Here, we calculate it's group of automorphisms, it's algebra of derivations and it's gradings. \end{abstract}
	
	
	\maketitle
	
	\section{Introduction}\label{se:intro}
	
	Structurable algebras are a class of algebras with involution introduced by Allison in 1978 \cite{AliStr} as a generalization of Jordan algebras. They are a generalization in the sense that they also have a Tits-Kantor-Koecher (TKK) construction of a Lie algebra. One of these algebras is the split quartic Cayley algebra which is used for example in \cite{AliD4} to give constructions of Lie algebras of type D4. Here, we calculate its group of automorphism, the algebra of derivations and it's gradings up to isomorphism.

    The structure is as follows: in section 2 we define the split quartic Cayley algebra and give a multiplication table, in section 3 we calculate it's group of automorphisms and it's algebra of derivations and in section 4 we calculate it's automorphisms.

	We are going to work over an algebraically closed field $\FF$ of characteristic different from $2,3$ and $5$. Groups are going to be considered abelian and it's neutral element will be denoted by $e$, unless we work with specific groups with their own notation.
	
    \section{The split quartic Cayley algebra}
	
	This section is devoted to introduce the split Cayley algebra. In order to do so, we recall a modified Cayley Dickson process introduced in \cite{AliFauCD} starting with the algebra $\cB=\FF\oplus \FF\oplus \FF\oplus \FF$. Take $\mu\in \FF^{\times}$. We denote by $t$ the trace of $\cB$. Define $b^{\theta}=-b+\frac{1}{2}t(b)1$ for all $b\in \cB$. Let $\cA=\cB\oplus s\cB=\{b_1+sb_2\mid b_1,b_2\in \cB\}$. We define a product and an involution in $\cA$ by:
	
	\begin{equation*}
		\begin{split}
			(b_1+sb_2)(b_3+sb_4)=(b_1b_3+\mu(b_2b_4^{\theta})^{\theta})+s(b_1^{\theta}b_4+(b_2^{\theta}b_{3}^{\theta})^{\theta})\\
			\overline{b_1+sb_2}=b_1-sb_2^{\theta}
		\end{split}
	\end{equation*}

	We call this algebra $\CD(\cB,\mu)$. Notice, that since we are in an algebraically closed field, the morphism $b_1+sb_2\mapsto b_1+\sqrt{\mu}sb_2$ is an isomorphism from $\CD(\cB,\mu)$ to $\CD(\cB,1)$. Hence, from now on, we are going to work with the algebra $\CD(\cB,1)$. We call this algebra the \textbf{split quartic Cayley algebra} check with the isomorphism in \cite[Proposition 6.5]{AliFauCD} and the definition in \cite{AliSkew1}.
	
	Call $x_1=(1,1,-1,-1), x_2=(1,-1,1,-1), x_3=(1,-1,-1,1)$. Call $\cK=\FF1\oplus \FF s$ which is a subalgebra of $\cA$ isomorphic to $\FF\times \FF$ via the automorphism given by $1\mapsto (1,1)$, $s\mapsto (1,-1)$. Then, the action $\circ\colon\cK\times \cA\to \cA$ given by $g\circ x=xg$ for all $g\in\cK,x\in \cA$, endows $\cA$ with a structure of left $\cK$-module, which is a free $\cK$-module spanned by $1,x_1,x_2,x_3$. If we identify $\cK$ with $\FF\times \FF$ and call $\ex$ the involution given by $\ex(x,y)=(y,x)$,the multiplication and the involution follows from the following rules:
	
	\begin{equation}\label{eq:product}
		\begin{split}
		(f1)(g1)=(fg)1, \  
		(gx_i)(f1)&=(fg)x_i=(\overline{f}1)(fx_i)\\
		(fx_i)(gx_i)=(f\overline{g})1, \ 
		(f x_i)(gx_j)&=(\overline{fg}x_k)
		\end{split}
\end{equation}
for all $f,g\in \cK$ and $\{i,j,k\}=\{1,2,3\}$ and
\begin{equation}\label{eq:invol}
		\overline{f_01+f_1x_1+f_2x_2+f_3x_3}=\ex(f_0)1+f_1x_1+f_2x_2+f_3x_3
	\end{equation}
for all $f_0,f_1,f_2,f_3\in\cK$.

\begin{remark}\label{rm:subspaces}
	Notice that if we define the subspaces $\cS=\{x\in\cA\mid \overline{x}=-x\}, \cH=\{x\in \cA\mid \overline{x}=x\}, \cM=\{x\in \cH\mid sx+xs=0\}$, we get that:
	\begin{equation*}
			\cS=\FF s, \ \cH=\FF1\oplus \left(\bigoplus_{i=1}^3\cK x_i\right), \  \cM=\bigoplus_{i=1}^3\cK x_i, \ \AlgF(\cS)=\cK
	\end{equation*}
\end{remark}

\begin{remark}\label{rm:StandardGrading}There is a $\ZZ_2^2$ grading of $\cA$ given by $\cA_{(\subo,\subo)}=\cK$, $\cA_{(\subo,\subuno)}=\cK x_1$, $\cA_{(\subuno,\subo)}=\cK x_2$ and $\cA_{(\subuno,\subuno)}=\cK x_3$. We call this grading the \textbf{standard quartic grading} and denote it by $\Gamma_{SQ}$.
\end{remark}

\section{Automorphisms and derivations}

In this section we calculate the groups of automorphisms and the algebra of derivations of $(\cA,\invol)$ (i.e. those automorphisms and derivations which commute with the involution). We begin with some easy properties:

\begin{lemma}\label{l:AutomorphismProp} Let $\varphi\in\Aut(\cA,\invol)$ and $d\in \Der(\cA,\invol)$
	\begin{itemize}
		\item[(1)] $\varphi(\cS)= \cS$, $\varphi(\cH)=\cH$, $d(\cS)\subseteq \cS$ and  $d(\cH)\subseteq\cH$
				\item[(2)] $\varphi(\cK)= \cK$, $\varphi(\cM)=\cM$, $d(\cK)=0$ and $d(\cM)\subseteq \cM$.
		\end{itemize}
\end{lemma}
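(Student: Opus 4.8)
The plan is to exploit that both $\varphi$ and $d$ commute with the involution $\invol$, so that the subspaces $\cS$ and $\cH$, being defined by the eigenvalue conditions $\overline{x}=-x$ and $\overline{x}=x$, are preserved automatically. Concretely, for part (1): if $x\in\cS$ then $\overline{\varphi(x)}=\varphi(\overline{x})=-\varphi(x)$, so $\varphi(x)\in\cS$; thus $\varphi(\cS)\subseteq\cS$, and since $\varphi$ is bijective on the finite-dimensional $\cA$ (equivalently, by applying the same reasoning to $\varphi^{-1}\in\Aut(\cA,\invol)$) we obtain the equality $\varphi(\cS)=\cS$. The identical computation with $\overline{x}=x$ gives $\varphi(\cH)=\cH$. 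For the derivation, $\overline{d(x)}=d(\overline{x})=-d(x)$ when $x\in\cS$ yields $d(\cS)\subseteq\cS$, and likewise $d(\cH)\subseteq\cH$; only inclusions are asserted here, so nothing more is required.

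For the $\cK$-part of (2) I would invoke Remark \ref{rm:subspaces}, namely $\cS=\FF s$ and $\AlgF(\cS)=\cK$. Since $\varphi$ is an algebra automorphism with $\varphi(\cS)=\cS$, it sends the subalgebra generated by $\cS$ to the subalgebra generated by $\varphi(\cS)=\cS$, whence $\varphi(\cK)=\varphi(\AlgF(\cS))=\AlgF(\cS)=\cK$. For the derivation, first $d(1)=0$ follows from $1=1\cdot 1$ and $\chr{\FF}\neq 2$. Next, by part (1) we have $d(s)\in d(\cS)\subseteq\cS=\FF s$, say $d(s)=\alpha s$. Applying $d$ to the relation $s^2=1$ (which holds because $s$ corresponds to $(1,-1)$ under the identification $\cK\cong\FF\times\FF$) gives $2\alpha s^2=d(s^2)=d(1)=0$, i.e. $2\alpha=0$, so $\alpha=0$ and hence $d(\cK)=0$.

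Finally, for the $\cM$-part I would use that $\cM=\{x\in\cH\mid sx+xs=0\}$ together with the facts just established. By part (1), $\varphi(s)\in\cS=\FF s$, say $\varphi(s)=\lambda s$ with $\lambda\in\FF^{\times}$ (nonzero since $\varphi$ is injective and $s\neq 0$); then for $x\in\cM$ we have $\varphi(x)\in\cH$ and $0=\varphi(sx+xs)=\lambda\bigl(s\varphi(x)+\varphi(x)s\bigr)$, forcing $s\varphi(x)+\varphi(x)s=0$, so $\varphi(x)\in\cM$, and the equality $\varphi(\cM)=\cM$ follows as before by bijectivity. For the derivation, since $d(s)=0$ and $d(\cH)\subseteq\cH$, applying $d$ to $sx+xs=0$ for $x\in\cM$ yields $s\,d(x)+d(x)\,s=0$ with $d(x)\in\cH$, that is $d(x)\in\cM$, giving $d(\cM)\subseteq\cM$.

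The only genuinely non-formal input is the identity $s^2=1$ used to pin down $d(s)=0$ (and, analogously, the description $\AlgF(\cS)=\cK$ for the automorphism statement); every other assertion is a direct consequence of $\varphi$ and $d$ commuting with $\invol$ and of the structural description recorded in Remark \ref{rm:subspaces}. I therefore expect no substantial obstacle beyond correctly reading off these structure constants.
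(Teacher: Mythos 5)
Your proof is correct and follows essentially the same route as the paper: inclusions from the fact that $\varphi$ and $d$ commute with $\invol$, equalities from invertibility, $d(\cK)=0$ by writing $d(s)\in\cS=\FF s$ and differentiating $s^2=1$, and the $\cM$-statements by applying $\varphi$ (with $\varphi(s)=\lambda s$, $\lambda\in\FF^{\times}$) and $d$ (with $d(s)=0$) to the defining relation $sx+xs=0$. The only cosmetic difference is that you obtain $\varphi(\cK)=\cK$ by invoking $\AlgF(\cS)=\cK$ from Remark~\ref{rm:subspaces}, whereas the paper simply notes $\varphi(1)=1$ and $\varphi(\cS)=\cS$; these are equivalent one-line observations.
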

		\begin{proof}
			\begin{itemize}Each conteinment '$\subseteq$' in $(1)$ is due to the fact that the involution commutes with $\varphi$ and $d$. The equalities follow from the fact that $\varphi$ is invertible.
	
			Since $\varphi(\cS)=\cS$, There is $\lambda\in \FF^{\times}$ such that $\varphi(s)e=\lambda s$. Since $\varphi(1)=1$, we get $\varphi(\cK)= \cK$. If $m\in \cH$ and $sm+ms=0$, applying $\varphi$ we get that $\lambda(s\varphi(m)+\varphi(m)s)=0$. Hence $\varphi(\cM)\subseteq\cM$. We get the equality since $\varphi$ is invertible. Since $d$ is a derivation $d(1)=0$. Using $(1)$, there is $\beta$ such that $d(s)=\beta s$. Since $0=d(1)=d(s^2)=2\beta 1$, we get that $d(\cK)=0$. Finally, if $m\in \cM$ $0=d(sm+ms)=sd(m)+d(m)s$. Using $(1)$ it follows  $d(\cM)\subseteq \cM$.
		\end{itemize}
		\end{proof}

Now, we will start calculating the automorphisms. In order to do so, we let $S_3$ be the symmetric on $3$ elements, and we will need the following lemma.

\begin{lemma}\label{l:PreservesStandardGrad}
	Let $\varphi\in\Aut(\cA,\invol)$. There is a permutation in $S_3$ which we denote $\sigma_{\varphi}$ such that $\varphi(\cK x_i)=\cK x_{\sigma_{\varphi}(i)}$ for all $i\in \{1,2,3\}$.
\end{lemma}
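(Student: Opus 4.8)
The plan is to characterize the three lines $\cK x_i$ intrinsically and then show that any $\varphi\in\Aut(\cA,\invol)$ must permute them. By Lemma~\ref{l:AutomorphismProp} we already know $\varphi(\cK)=\cK$ and $\varphi(\cM)=\cM$, so $\psi\bydef\varphi|_{\cK}$ is an automorphism of $\cK\cong\FF\times\FF$. Since $\Aut(\FF\times\FF)=\{\id,\ex\}$, there are only two cases, $\psi=\id$ or $\psi=\ex$. Writing the primitive idempotents of $\cK$ as $e_{\pm}=\frac{1}{2}(1\pm s)$, so that $\overline{e_{\pm}}=\ex(e_{\pm})=e_{\mp}$, the identity $\varphi(g\circ x)=\varphi(xg)=\varphi(x)\psi(g)=\psi(g)\circ\varphi(x)$ shows that $\varphi$ is $\psi$-semilinear for the $\cK$-module structure.

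First I would split $\cM=U\oplus V$ with $U\bydef e_{+}\circ\cM$ and $V\bydef e_{-}\circ\cM$, the $3$-dimensional spaces $U=\lspan{u_1,u_2,u_3}$ and $V=\lspan{v_1,v_2,v_3}$ for $u_i\bydef e_{+}\circ x_i$ and $v_i\bydef e_{-}\circ x_i$. Semilinearity gives $\varphi(U)=U,\ \varphi(V)=V$ when $\psi=\id$, and $\varphi(U)=V,\ \varphi(V)=U$ when $\psi=\ex$. A direct computation from \eqref{eq:product} yields the clean rules $u_iu_i=v_iv_i=0$, $u_iu_j=v_k$, $v_iv_j=u_k$ for $\{i,j,k\}=\{1,2,3\}$, together with $u_iv_j=v_iu_j=0$ for $i\ne j$ and $u_iv_i=e_{+}$, $v_iu_i=e_{-}$. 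In particular $U\cdot U\subseteq V$ and $V\cdot V\subseteq U$.

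The key step is to identify the coordinate lines inside $U$ as the null vectors of the quadratic map $q\colon U\to V$, $q(u)=u\cdot u$. Expanding $q(a_1u_1+a_2u_2+a_3u_3)=2a_2a_3\,v_1+2a_1a_3\,v_2+2a_1a_2\,v_3$ shows (using $\chr\FF\ne2$) that $q(u)=0$ precisely when at most one coordinate is nonzero, so the null cone of $q$ is exactly $\FF u_1\cup\FF u_2\cup\FF u_3$, and likewise for $V$. This is where the idempotent splitting is essential: the weaker property $\cK x_i\cdot\cK x_i\subseteq\cK$ does not single out the $\cK x_i$, since, e.g., $\cK(e_{+}\circ x_1+e_{-}\circ x_2)$ also satisfies it. Since $\varphi$ is a multiplicative linear bijection that intertwines $q$ with $q$ (if $\psi=\id$) or with the corresponding map on $V$ (if $\psi=\ex$), it sends this null cone to the null cone of $U$ or of $V$ accordingly; being linear and bijective, it permutes the three lines. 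This produces $\sigma,\tau\in S_3$ with $\varphi(\FF u_i)=\FF u_{\sigma(i)}$ and $\varphi(\FF v_i)=\FF v_{\tau(i)}$ (landing in $V$, resp. $U$, when $\psi=\ex$).

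Finally I would match the two permutations using the pairing $U\times V\to\cK$. From the rules above, $u_iv_j\ne0$ if and only if $i=j$; applying $\varphi$ and comparing with $\varphi(u_i)\varphi(v_j)\in\FF u_{\sigma(i)}\cdot\FF v_{\tau(j)}$, which is nonzero iff $\sigma(i)=\tau(j)$, forces $\sigma(i)=\tau(j)\iff i=j$, hence $\sigma=\tau$. Therefore $\varphi(\cK x_i)=\varphi(\FF u_i\oplus\FF v_i)=\cK x_{\sigma(i)}$, and $\sigma_{\varphi}\bydef\sigma$ is the desired permutation. The only genuine obstacle is the computation identifying the null cone of $q$ with the three coordinate lines; once the decomposition $\cM=U\oplus V$ is in place, the rest is bookkeeping.
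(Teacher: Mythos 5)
Your proof is correct, but it follows a genuinely different route from the paper's. The paper argues directly in $\cK$-module coordinates: writing $\varphi(x_i)=r_1x_1+r_2x_2+r_3x_3$ with $r_j\in\cK$ (possible since $\varphi(\cM)=\cM$), it expands $\varphi(x_i)^2=1$ against the standard quartic grading to obtain $r_1r_2=r_2r_3=r_3r_1=0$ together with the scalar condition $\sum_j r_j\overline{r_j}=1$, and then uses the zero-divisor structure of $\cK\cong\FF\times\FF$ to conclude that exactly one $r_j$ is nonzero with $r_j\overline{r_j}=1$; hence $\varphi(\cK x_i)=\cK x_j$, and invertibility of $\varphi$ yields the permutation. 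You instead refine $\cM$ by the idempotents $e_\pm=\frac{1}{2}(1\pm s)$ into $U\oplus V$, characterize the coordinate lines $\FF u_i$ intrinsically as the null cone of the squaring map $q\colon U\to V$, use $\psi$-semilinearity (where $\psi=\varphi|_{\cK}\in\{\id,\ex\}$) to see that $\varphi$ preserves or swaps $U$ and $V$, and then glue the two resulting permutations via the pairing $u_iv_j$. The paper's argument buys brevity: one computation, no case distinction on $\varphi|_{\cK}$, and no matching step $\sigma=\tau$. Yours buys a conceptual, automorphism-independent description of the six lines $\FF e_{\pm}x_i$ (as the null vectors of squaring), which explains \emph{why} every automorphism must permute them, and the clean multiplication table for $U$ and $V$ is reusable data (for instance in the paper's section on gradings, where the elements $e_\pm x_i$ reappear constantly). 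Incidentally, your example $\cK(e_+x_1+e_-x_2)$, whose elements all square to zero, is also the precise reason the paper's deduction needs the scalar part $\sum_j r_j\overline{r_j}=1$ and not just the vanishing of the cross terms: $r_1=e_+$, $r_2=e_-$, $r_3=0$ satisfies $r_1r_2=r_2r_3=r_3r_1=0$ with two nonzero coefficients. Note finally that your null-cone computation uses $\chr\FF\neq 2$, which is within the paper's standing hypotheses.
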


\begin{proof}
Due to lemma \ref{l:AutomorphismProp} there are $r_1,r_2,r_3\in\cK$ such that $\varphi(x_i)=r_1x_1+r_2x_2+r_3x_3$. Let $i$ be such that $r_i\neq 0$. Then since $1=\varphi(x_i)^2= r_1\overline{r_1}+r_3\overline{r_3}+r_3\overline{r_3}+\overline{r_2r_3}x_1+\overline{r_1r_3}x_2+\overline{r_1r_2}x_3$. That, due to remark \ref{rm:StandardGrading} means that $r_1r_2=r_2r_3=r_3r_1=0$ since up to scalar, the only zero divisors in $\FF\times \FF$ are $(1,0)$ and $(0,1)$, this implies that $r_j,r_k=0$ for $\{i,j,k\}=\{1,2,3\}$. Hence $\varphi(x_1)=r_ix_i$ and $r_i\overline{r_i}=1$. Since due to lemma \ref{l:AutomorphismProp} $\varphi(\cK x_i)=\varphi(\cK)\varphi(x_i)=\cK r_i x_i$, then we have proved that there is a map $\sigma_{\varphi}\colon \{1,2,3\}\to \{1,2,3\}$ such that $\varphi(\cK x_i)=\cK x_{\sigma(i)}$ for all $i\in \{1,2,3\}$. Since $\varphi$ is invertible this map is a permutation.
\end{proof}	

\begin{remark}\label{rm:PermutAut}
Let $\sigma$ be a permutation in $S_3$. We denote by $f_{\sigma}\colon \cA \to \cA$  the map defined as $f_{\sigma}(r_01+r_1x_1+r_2x_2+r_3x_3)=r_01+r_1x_{\sigma(1)}+r_2x_{\sigma(2)}+r_3x_{\sigma(3)}$.  Using \eqref{eq:product} is not hard to check that this is an automorphism of $(\cA,\invol)$. Moreover the map $\theta\colon S_3\to \Aut(\cA,\invol)$ defined by $\sigma\mapsto f_{\sigma}$ is a monomorphism of groups and we denote it's image by $H$.
\end{remark}

If we have an algebra with involution $(\cB,\invol)$, a group $G$ and a grading $\Gamma\colon \cB=\bigoplus_{g\in G}\cB_g$, we denote $\Aut(\cB,\Gamma,\invol)\bydef\{\varphi\in\Aut(\cB,\invol)\mid \varphi(\cB_g)=\cB_g\ \forall g\in G\}$,

\begin{lemma}\label{l:structAut}
	$\Aut(\cA,\invol)\cong \Aut(\cA,\Gamma_{SQ},\invol)\rtimes H$
\end{lemma}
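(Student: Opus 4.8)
The plan is to realize the stated isomorphism as an internal semidirect product arising from a split short exact sequence. First I would define a map $\Phi\colon\Aut(\cA,\invol)\to S_3$ by $\Phi(\varphi)=\sigma_\varphi$, where $\sigma_\varphi$ is the permutation furnished by Lemma \ref{l:PreservesStandardGrad}. This is well defined because the subspaces $\cK x_1,\cK x_2,\cK x_3$ are distinct, so the permutation satisfying $\varphi(\cK x_i)=\cK x_{\sigma_\varphi(i)}$ is uniquely determined by $\varphi$. To see that $\Phi$ is a group homomorphism, I would compute, for $\varphi,\psi\in\Aut(\cA,\invol)$, that $(\psi\varphi)(\cK x_i)=\psi(\cK x_{\sigma_\varphi(i)})=\cK x_{\sigma_\psi(\sigma_\varphi(i))}$, so that $\sigma_{\psi\varphi}=\sigma_\psi\sigma_\varphi$ by uniqueness.

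Next I would identify the kernel of $\Phi$ with $\Aut(\cA,\Gamma_{SQ},\invol)$. By Lemma \ref{l:AutomorphismProp} every $\varphi\in\Aut(\cA,\invol)$ satisfies $\varphi(\cK)=\cK$, hence $\varphi$ always preserves the neutral component $\cA_{(\subo,\subo)}=\cK$. Therefore $\sigma_\varphi$ is the identity if and only if $\varphi$ fixes each of the remaining homogeneous components $\cK x_1,\cK x_2,\cK x_3$, i.e. $\ker\Phi=\Aut(\cA,\Gamma_{SQ},\invol)$; in particular this subgroup is normal in $\Aut(\cA,\invol)$. I would then check that $\theta$ from Remark \ref{rm:PermutAut} is a section of $\Phi$: since $f_\sigma(r x_i)=r x_{\sigma(i)}$ for all $r\in\cK$, we get $f_\sigma(\cK x_i)=\cK x_{\sigma(i)}$ and hence $\Phi(f_\sigma)=\sigma$, that is $\Phi\circ\theta=\id_{S_3}$. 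Consequently $\Phi$ is surjective and $H=\theta(S_3)$ is a complement to its kernel.

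Finally I would invoke the internal characterization of the semidirect product with $N=\Aut(\cA,\Gamma_{SQ},\invol)=\ker\Phi$ and $H=\theta(S_3)$. We have $N\trianglelefteq\Aut(\cA,\invol)$ and $H\le\Aut(\cA,\invol)$. Moreover $N\cap H=\{\id\}$, because $f_\sigma\in\ker\Phi$ forces $\sigma=\Phi(f_\sigma)=e$ and then $f_\sigma=f_e=\id$; and $N\cdot H=\Aut(\cA,\invol)$, because for any $\varphi$ one has $\Phi(\varphi f_{\sigma_\varphi}^{-1})=\sigma_\varphi\sigma_\varphi^{-1}=e$, so $\varphi f_{\sigma_\varphi}^{-1}\in N$ and $\varphi=(\varphi f_{\sigma_\varphi}^{-1})f_{\sigma_\varphi}\in N\cdot H$. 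These four conditions give $\Aut(\cA,\invol)=N\rtimes H\cong\Aut(\cA,\Gamma_{SQ},\invol)\rtimes H$. The only points requiring care, and thus the main obstacle, are the well-definedness and multiplicativity of $\sigma_\varphi$; both reduce to the uniqueness of the permutation supplied by Lemma \ref{l:PreservesStandardGrad}, after which the conclusion is the standard splitting argument.
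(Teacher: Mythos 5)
Your proof is correct and takes essentially the same route as the paper: both rest on the permutation $\sigma_\varphi$ of Lemma \ref{l:PreservesStandardGrad} and the section $\theta$ of Remark \ref{rm:PermutAut}, and both use the factorization $\varphi=(\varphi\circ f_{\sigma_\varphi}^{-1})\circ f_{\sigma_\varphi}$ together with the trivial intersection $\Aut(\cA,\Gamma_{SQ},\invol)\cap H=\{\id\}$. Your packaging via the split homomorphism $\Phi\colon\varphi\mapsto\sigma_\varphi$ is in fact a slight improvement, since identifying $\Aut(\cA,\Gamma_{SQ},\invol)$ as $\ker\Phi$ (using $\varphi(\cK)=\cK$ from Lemma \ref{l:AutomorphismProp} for the neutral component) delivers for free the normality that the paper merely asserts is not hard to show.
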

\begin{proof}
	Let $\varphi\in \Aut(\cA,\invol)$. We are going to show that $\varphi\circ f^{-1}_{\sigma_{\varphi}}\in \Aut(\cA,\Gamma_{SQ},\invol)$. Since $\theta$ as defined in Remark \ref{rm:PermutAut} is an automorphism $f^{-1}_{\sigma_{\varphi}}=f_{\sigma_{\varphi}^{-1}}$. By definition $\varphi(\cK x_{i})=\cK x_{\sigma_{\varphi}(i)}$ for all $i$. Hence, $\varphi\circ f^{-1}_{\sigma_{\varphi}}\in \Aut(\cA,\Gamma_{SQ},\invol)$. Therefore, $\Aut(\cA,\invol)= \Aut(\cA,\Gamma_{SQ},\invol)H$. Finally, $f_{\sigma}\in\Aut(\cA,\Gamma_{SQ},\invol)$ if and only if $\sigma=\id$. therefore $\Aut(\cA,\Gamma_{SQ},\invol)\cap H=\{\id\}$ finally it is not hard to show that $\Aut(\cA,\Gamma_{SQ},\invol)$ is a normal subgroup so the result follows.
	
\end{proof}

We denote by $S^1$ the subgroup of $\cK^{\times}$ whose underlying set is $\{r\in\cK^{\times}\mid r\overline{r}=1\}$ and we denote by $C_2$ the ciclic group of order $2$ generated by $\sigma$. We can define an action on $\cK$ by $\sigma(s)=-s$. Like this we identify $C_2$ with $\Aut(\cK)$.
\begin{lemma}\label{l:AutGrad}
$\Aut(\cA,\Gamma_{SQ},\invol)\cong (S^1\times S^1)\rtimes C_2$ with product given by $(r_1,r_2,g)\star(s_1,s_2,h)=(r_1g(s_1),r_2g(s_2),gh)$.
\end{lemma}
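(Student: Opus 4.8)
The plan is to parametrize each grading-preserving automorphism by its restriction to $\cK$ together with the three scalars by which it rescales $x_1,x_2,x_3$, to identify exactly which such data define an automorphism of $(\cA,\invol)$, and then to compute a single composition to recover the semidirect-product law. First I would fix $\varphi\in\Aut(\cA,\Gamma_{SQ},\invol)$. Preserving $\Gamma_{SQ}$ means $\varphi(\cK)=\cK$ and $\varphi(\cK x_i)=\cK x_i$ for each $i$; since $\cK\cong\FF\times\FF$ has automorphism group $C_2=\Aut(\cK)$, the restriction $g\bydef\varphi|_{\cK}$ lies in $C_2$ (equivalently, by Lemma~\ref{l:AutomorphismProp}, $\varphi(s)=\pm s$). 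Writing $\varphi(x_i)=a_ix_i$ with $a_i\in\cK$ and using that $\varphi$ is semilinear for the module action, so that $\varphi(ax_i)=g(a)a_ix_i$ for all $a\in\cK$, the automorphism $\varphi$ is completely determined by the datum $(g,a_1,a_2,a_3)$.

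Next I would read off the constraints from \eqref{eq:product}. Applying $\varphi$ to $x_i^2=1$ gives $a_i\ol{a_i}=1$, i.e. $a_i\in S^1$; applying it to $x_ix_j=x_k$ gives $a_k=\ol{a_ia_j}$ for $\{i,j,k\}=\{1,2,3\}$. Using $a_i\ol{a_i}=1$ one checks that these three relations are all equivalent to the single condition $a_3=\ol{a_1a_2}$ (which then automatically lies in $S^1$), and that the remaining products impose nothing new. Compatibility with the involution is almost free: $\invol$ fixes each $\cK x_i$ and acts as $\ex$ on $\cK$, and $g$ commutes with $\ex$, so that $\varphi(\ol y)=\ol{\varphi(y)}$ holds throughout. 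Thus $\varphi\mapsto(a_1,a_2,g)$ is a well-defined map into $S^1\times S^1\times C_2$; it is injective because $(g,a_1,a_2,a_3)$ determines $\varphi$, and surjective because, conversely, for any $g\in C_2$ and $a_1,a_2\in S^1$ the formulas above (with $a_3\bydef\ol{a_1a_2}$) define an element of $\Aut(\cA,\Gamma_{SQ},\invol)$, the previous computations being reversible.

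Finally I would transport the group law across this bijection. For $\varphi\leftrightarrow(a_1,a_2,g)$ and $\psi\leftrightarrow(b_1,b_2,h)$ one computes $(\varphi\circ\psi)|_{\cK}=gh$ and $(\varphi\circ\psi)(x_i)=\varphi(b_ix_i)=g(b_i)a_ix_i$, so the composite corresponds to $(a_1g(b_1),a_2g(b_2),gh)$, which is precisely the stated product $(a_1,a_2,g)\star(b_1,b_2,h)$. Hence the bijection is a group isomorphism onto $(S^1\times S^1)\rtimes C_2$ with $C_2$ acting coordinatewise. I expect the main obstacle to be the bookkeeping in the middle paragraph: one has to track the interaction of $g$ with the bar involution through each identity in \eqref{eq:product} to be certain that no hidden relation couples $g$ to the $a_i$, and one must fix the composition convention consistently so that it lands on the orientation of $\star$ rather than on its opposite.
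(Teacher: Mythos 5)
Your proof is correct and takes essentially the same approach as the paper: both parametrize an element of $\Aut(\cA,\Gamma_{SQ},\invol)$ by its restriction $\psi\in\Aut(\cK)\cong C_2$ together with the scalars $a_i\in S^1$ defined by $\varphi(x_i)=a_ix_i$, subject to the single relation $a_3=\overline{a_1a_2}$ forced by $x_i^2=1$ and $x_1x_2=x_3$. The only difference is one of direction and emphasis: the paper constructs the map $\theta$ from $(S^1\times S^1)\rtimes C_2$ into the automorphism group and checks bijectivity, while you build the inverse parametrization and, usefully, verify the composition law $(a_1,a_2,g)\star(b_1,b_2,h)=(a_1g(b_1),a_2g(b_2),gh)$ explicitly, which the paper leaves implicit in calling $\theta$ a morphism.
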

\begin{proof}
Consider the morphism $\theta\colon (S^1\times S^1)\rtimes Aut(\cK)\to \Aut(\cA,\Gamma_{SQ},\invol)$ given by $\theta(r_1,r_2,\psi)(s_0+s_1x_1+s_2x_2+s_3x_3)=\psi(s_0)+\psi(s_1)(r_1x_1)+\psi(s_2)(r_2x_2)+\psi(s_3)(r_3x_3)$. Where $r_3=\overline{r_1r_2}$. Using \eqref{eq:product} and \eqref{eq:invol} it is clear that $\theta(r_1,r_2,\psi)$ is an automorphism. Since $r_i(\overline{r}_ix_i)=x_i$ we can check that $\theta(r_1,r_2,\psi)^{-1}=\theta(\psi^{-1}(\overline{r}_1),\psi^{-1}(\overline{r}_2),\psi^{-1})$.

Clearly $\theta$ is injective. Moreover, if $\varphi$ is an element of $\Aut(\cA,\Gamma_{SQ},\invol)$, then, let $\psi=\varphi_{\mid \cK 1}$, $\varphi(x_1)=r_1x_1$,  $\varphi(x_2)=r_2x_2$ and $\varphi(x_3)=r_3x_3$. Since $x_i^2=1$ for $i=1,2$, we get that $r_i\overline{r_i}=1$. Since $x_1x_2=x_3$ we get that $r_3=\overline{r_1r_2}$. Hence, it's easy to show that $\theta(r_1,r_2,\psi)=\varphi$. Since $\Aut(\cK)$ consist on the identity and the involution sending $s$ to $-s$, it's easy to check that it is isomorphic  to $C_2$. \end{proof}

We can finish calculating the automorphisms with the following proposition:

\begin{theorem}
$\Aut(\cA,\invol)\cong ((S^1\times S^1)\rtimes C_2)\rtimes S_3$
\end{theorem}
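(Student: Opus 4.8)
The plan is to assemble the statement directly from the two structural lemmas already proved, so that essentially no new computation is needed. By Lemma \ref{l:structAut} we have $\Aut(\cA,\invol)\cong \Aut(\cA,\Gamma_{SQ},\invol)\rtimes H$, and by Remark \ref{rm:PermutAut} the homomorphism $\theta\colon S_3\to\Aut(\cA,\invol)$ is injective with image $H$, whence $H\cong S_3$. It therefore remains only to replace the normal factor $\Aut(\cA,\Gamma_{SQ},\invol)$ by the description obtained in Lemma \ref{l:AutGrad}.

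First I would record the elementary group-theoretic fact underlying the substitution: if $G=N\rtimes K$ is an internal semidirect product and $\alpha\colon N'\to N$ is an isomorphism, then transporting the conjugation action of $K$ on $N$ through $\alpha$ yields an action of $K$ on $N'$ for which $G\cong N'\rtimes K$. Applying this with $G=\Aut(\cA,\invol)$, $N=\Aut(\cA,\Gamma_{SQ},\invol)$, $K=H\cong S_3$, and $N'=(S^1\times S^1)\rtimes C_2$ (using the isomorphism of Lemma \ref{l:AutGrad}) gives $\Aut(\cA,\invol)\cong((S^1\times S^1)\rtimes C_2)\rtimes S_3$, which is exactly the claim.

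I do not expect any real obstacle, since all the substantive work---identifying the grading-preserving automorphisms, extracting the copy of $S_3$ coming from permutations of the $x_i$, and verifying the splitting and the normality of the grading-preserving subgroup---has already been carried out in Lemmas \ref{l:structAut} and \ref{l:AutGrad}. The one point deserving a word of caution is that the $S_3$-action on $(S^1\times S^1)\rtimes C_2$ occurring in the final iterated semidirect product is precisely the one obtained by transporting the $H$-action through the isomorphism of Lemma \ref{l:AutGrad}; since the statement leaves this action implicit, no explicit formula for it is required. Should one wish to make it explicit, it would be read off from how the automorphisms $f_\sigma$ conjugate the automorphisms $\theta(r_1,r_2,\psi)$, i.e. from the permutation action of $S_3$ on the triple $(r_1,r_2,r_3)$ subject to $r_3=\overline{r_1r_2}$.
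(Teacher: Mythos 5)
Your proof is correct and follows exactly the paper's own route: the paper's proof is simply the one-line observation that the theorem follows by combining Lemma \ref{l:structAut} with Lemma \ref{l:AutGrad}. Your additional remarks (the transport-of-action fact and the identification $H\cong S_3$ via Remark \ref{rm:PermutAut}) just make explicit the details the paper leaves implicit, and your caution about the unspecified $S_3$-action is well taken.
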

\begin{proof}
	This is a consequence of Lemma \ref{l:structAut} and Lemma \ref{l:AutGrad}
\end{proof}

Finally, we calculate the derivations. In order to do so, for two given numbers $\lambda,\beta\in\FF$ we define the map $d_{(\lambda,\beta)}\colon \cA\to \cA$ by $d(r_0+r_1x_1+r_2x_2+r_3x_3)=\lambda r_1(sx_1)+\beta r_2(sx_2)-(\lambda+\beta)r_3(sx_3)$.

\begin{theorem}
$\Der(\cA,\invol)=\{d_{\lambda,\beta}\mid \lambda,\beta\in \FF\}$
\end{theorem}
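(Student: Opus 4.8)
The plan is to pin a derivation $d\in\Der(\cA,\invol)$ down by its effect on $x_1,x_2,x_3$ and then let the multiplicative relations among these elements force $d=d_{(\lambda,\beta)}$. First I would invoke Lemma~\ref{l:AutomorphismProp}, which gives $d(\cK)=0$ (so in particular $d(1)=d(s)=0$) and $d(\cM)\subseteq\cM$; hence each $d(x_i)$ lies in $\cM=\bigoplus_j\cK x_j$. The key observation is that $d$ is $\cK$-linear for the module action $\circ$: since $r\circ x=xr$ and $d(r)=0$ for $r\in\cK$, the Leibniz rule gives $d(r\circ x)=d(x)\,r=r\circ d(x)$. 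As $\cA$ is a free $\cK$-module on $1,x_1,x_2,x_3$ and $d(1)=0$, the derivation is completely determined by the three elements $d(x_i)=\sum_j c_{ij}x_j$ with $c_{ij}\in\cK$.

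Next I would feed the multiplication table into the Leibniz rule. Applying $d$ to $x_i^2=1$ (which holds by \eqref{eq:product}) yields $d(x_i)x_i+x_i d(x_i)=0$; expanding both products with \eqref{eq:product} and comparing coefficients with respect to the free $\cK$-basis $1,x_1,x_2,x_3$ forces $c_{ij}=0$ for $j\neq i$ together with $c_{ii}+\overline{c_{ii}}=0$. Since the elements of $\cK$ negated by the involution are exactly $\cS=\FF s$, we get $c_{ii}=\lambda_i s$ for some $\lambda_i\in\FF$, i.e. $d(x_i)=\lambda_i(sx_i)$. Feeding in the remaining relation $x_1x_2=x_3$ and applying $d$, a short computation with \eqref{eq:product} gives $d(x_1)x_2+x_1d(x_2)=-(\lambda_1+\lambda_2)(sx_3)$, whence $\lambda_3=-(\lambda_1+\lambda_2)$ and $d=d_{(\lambda_1,\lambda_2)}$. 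This proves $\Der(\cA,\invol)\subseteq\{d_{(\lambda,\beta)}\mid\lambda,\beta\in\FF\}$.

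For the reverse inclusion I would verify directly that each $d_{(\lambda,\beta)}$ is a derivation commuting with the involution. Commutation with $\invol$ is essentially automatic: the image of $d_{(\lambda,\beta)}$ lies in $\cM$, which is fixed pointwise by the involution according to \eqref{eq:invol}, while $d_{(\lambda,\beta)}$ annihilates the $\cK 1$-component of any element (the only component moved by $\invol$); hence $\overline{d_{(\lambda,\beta)}(x)}=d_{(\lambda,\beta)}(x)=d_{(\lambda,\beta)}(\overline{x})$. The Leibniz identity $d(uv)=d(u)v+ud(v)$ is $\FF$-bilinear in $(u,v)$, so it suffices to test it on the $\FF$-basis $\{1,s\}\cup\{x_i,sx_i\}$ using \eqref{eq:product}; the relations $x_i^2=1$ and $x_ix_j=x_k$ already exhibit the crucial cancellations (for instance $(sx_1)x_1+x_1(sx_1)=0=d(1)$ and $d(x_1)x_2+x_1d(x_2)=-(\lambda+\beta)(sx_3)=d(x_3)$). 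I expect the only genuine obstacle to be the bookkeeping caused by the conjugations $\overline{(\cdot)}$ in \eqref{eq:product}: one must track carefully which factor is conjugated when extracting the component equations, but no conceptual difficulty arises, and the resulting $\Der(\cA,\invol)$ is the expected abelian two-parameter family matching the rank-two torus $S^1\times S^1$ in the automorphism group.
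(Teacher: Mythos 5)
Your proof is correct and follows essentially the same route as the paper's: both use Lemma~\ref{l:AutomorphismProp} to reduce to the values $d(x_i)\in\cM$, derive $d(x_i)=\lambda_i(sx_i)$ from $x_i^2=1$, obtain $\lambda_3=-(\lambda_1+\lambda_2)$ from $x_1x_2=x_3$, and conclude by directly verifying that the maps $d_{(\lambda,\beta)}$ are derivations commuting with $\invol$. The extra details you supply ($\cK$-linearity of $d$ via the module action, and the observation that $\invol$ fixes $\cM$ pointwise) merely flesh out steps the paper leaves implicit.
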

\begin{proof}
	From Lemma \ref{l:AutomorphismProp} we get that for any $d\in \Der(\cA,\invol)$, $d(\cM)\subseteq \cM$. Therefore, for ${i,j,k}=\{1,2,3\}$ we get that $d(x_i)=r_ix_i+r_jx_j+r_kx_k$ for some $r_i,r_j,r_k\in\cK$. Since $0=d(1)=d(x_i^2)=x_id(x_i)+d(x_i)x_i$ we get that $r_i+\overline{r_i}+2(\overline{r_j}x_k+\overline{r_k}x_j)=0$. Therefore, there is some $\lambda_i\in \FF$ such that $d(x_i)=\lambda_i(sx_i)$. Moreover, since $\lambda_3(sx_3)=d(x_3)=d(x_1x_2)=x_1d(x_2)+d(x_1)x_2$, it follows that $\lambda_3=-\lambda_1-\lambda_2$. Finally, since $d(\cK)=0$ and using the properties of the derivations, it follows that $d(r_0+r_1x_1+r_2x_2+r_3x_3)=\lambda_1 r_1(sx_1)+\lambda_2 r_2(sx_2)-(\lambda_1+\lambda_2)r_3(sx_3)$. Now, checking that $d_{1,0}$ and $d_{0,1}$ are derivations is easy and since they span $\{d_{\lambda,\beta}\mid \lambda,\beta\in \FF\}$ we get the equality.

\end{proof}

\section{Gradings}

Given an algebra with involution $(\cA,\invol)$ and a group $G$, a \textbf{$G$-grading} $\Gamma$ on $\cA$ is a vector space decomposition:

\[ \Gamma\colon \cA=\bigoplus_{g\in G}\cA_g\]

satisfying that $\cA_g\cA_h\subseteq\cA_{gh}$ and $\overline{\cA_g}\subseteq \cA_g$ for all $g,h\in G$. If the grading is fixed we refer to $\cA$ as a \textbf{$G$-graded algebra with involution}. We say that an element $x$ is \textbf{homogeneous} if there is some $g\in G$ such that $x\in\cA_g$. In this case we say that $x$ has degree $g$ and we denote it as $\deg(x)=g$. We say that a subspace $V$ of $\cA$ is a \textbf{graded subspace} if $V=\bigoplus_{g\in G}(V\cap \cA_g)$ in this case we will denote $V_g=V\cap \cA_g$.

\begin{remark}\label{rm:GradedHV}
	For a $G$ grading $\Gamma$, $\cS$ and $\cH$ are graded subspaces (see \cite[lemma 3.8]{AC20}). Moreover, since $\cS=s\FF$ and $s^2=1$, we have that $\deg(s)^2=e$ where $e$ is the neutral element of $G$.
\end{remark}
Given two $G$-graded algebras with involution $(\cA,\invol)$ and $(\cB,\invol)$ we say that they are \textbf{isomorphic} if there exist an isomorphism of algebras with involution $\varphi\colon \cA\to \cB$ satisfying that $\varphi(\cA_g)=\cB_g$. 

Given a $G$-grading $\Gamma$ and a $H$-grading $\Gamma'$  of $(\cA,\invol)$ we say that $\Gamma'$ is a \textbf{coarsening} of $\Gamma$ (or that $\Gamma$ is a \textbf{refinement} of $\Gamma'$) if for every $h\in H$ there is a $g\in G$ such that $\cA_g\subseteq \cA_h$.

The basic facts about gradings can be found in \cite{EKmon}.

\begin{example}\label{ex:StructurableGrading}
	Given the split quartic Cayley algebra $(\cA,\invol)$ and $\{i,j,k\}=\{1,2,3\}$ we can define the $\ZZ_2$-grading $\Gamma^i_S\colon \cA=\cA_{\subo}\oplus \cA_{\subuno}$ with $\cA_{\subo}=\cK\oplus \cK x_i$ and $\cA_{\subuno}=\cK x_j\oplus\cK x_k$. 
	
	These gradings are a coarsening of the standard quartic grading $\Gamma_{SQ}$. Moreover, given $i\neq j$ and a permutation $\sigma$ with $\sigma(i)=j$ we get that $\Gamma^i_s$ is isomorphic to $\Gamma_j$ via the automorphism $f_{\sigma}$ with the notation of \ref{rm:PermutAut}
\end{example}

In this section we are going find up to isomorphism the gradings on $(\cA,\invol)$. We start with a lemma:

\begin{lemma}\label{l:KandMGraded}
	For a $G$-grading $\Gamma\colon \cA=\bigoplus_{g\in G}\cA_g$ on $(\cA,\invol)$, the subspaces $\cK$ and $\cM$ are graded subspaces.
\end{lemma}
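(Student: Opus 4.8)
The plan is to show that the subspaces $\cK$ and $\cM$ are graded by exploiting their characterizations in Remark \ref{rm:subspaces}. Recall $\cK=\AlgF(\cS)$ and $\cM=\{x\in\cH\mid sx+xs=0\}$. By Remark \ref{rm:GradedHV}, both $\cS$ and $\cH$ are already graded subspaces, and moreover $\cS=\FF s$ with $\deg(s)^2=e$. Since $\cS$ is one-dimensional and graded, $s$ is a homogeneous element, say $\deg(s)=h$ with $h^2=e$.

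First I would handle $\cK$. Since $\cK$ is the (unital) subalgebra generated by $\cS=\FF s$, and $s$ is homogeneous of degree $h$, the subalgebra generated by $s$ is spanned by the homogeneous elements $1$ (degree $e$) and $s$ (degree $h$). Explicitly, $\cK=\FF 1\oplus\FF s$, and both $1$ and $s$ are homogeneous, so $\cK=(\cK\cap\cA_e)\oplus(\cK\cap\cA_h)$ is a decomposition into homogeneous components. Hence $\cK$ is a graded subspace.

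Next I would treat $\cM$. The idea is that $\cM$ is cut out inside the graded subspace $\cH$ by a condition defined via multiplication by the homogeneous element $s$. Define the linear map $L\colon\cH\to\cA$ by $L(x)=sx+xs$; then $\cM=\ker(L)\cap\cH=\{x\in\cH\mid L(x)=0\}$. Because $s$ is homogeneous of degree $h$, left and right multiplication by $s$ are graded maps: they send $\cA_g$ into $\cA_{hg}$ and $\cA_{gh}$ respectively, so $L$ shifts the $\cA_g$ component of $x$ into a combination lying in graded components determined by $g$ and $h$. The key observation is that $L$ restricted to $\cH$ is a graded linear map (it raises degree by $h$ on each side, but since $\overline{s}=-s$ we can check $L$ maps homogeneous elements of $\cH$ to homogeneous elements), and therefore its kernel is a graded subspace: if $x=\sum_g x_g\in\cM$ with $x_g\in\cH\cap\cA_g$, then $L(x)=\sum_g L(x_g)$ decomposes into distinct homogeneous components (since multiplication by $s$ of degree $h$ sends $x_g$ to degree $hg$), forcing each $L(x_g)=0$, so each $x_g\in\cM$.

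The step I expect to be the main obstacle is verifying carefully that $L$ is genuinely a graded map on $\cH$ and that the components $L(x_g)$ land in pairwise distinct homogeneous pieces so that $L(x)=0$ really forces each $L(x_g)=0$. One must confirm that left multiplication $x\mapsto sx$ and right multiplication $x\mapsto xs$ both shift degree by the same element $h$ (this follows from $\cA_g\cA_{g'}\subseteq\cA_{gg'}$ applied with the homogeneous element $s$), so that $L(x_g)\in\cA_{hg}$ and distinct $g$ give distinct $hg$. Once this grading-compatibility of $L$ is established, the conclusion that $\ker(L)\cap\cH=\cM$ is graded is immediate, and combined with the result for $\cK$ this completes the proof.
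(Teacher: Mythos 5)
Your proof is correct and follows essentially the same route as the paper: the paper likewise gets $\cK=\FF 1\oplus\cS$ graded from the gradedness of $\FF 1$ and $\cS$, and proves $\cM$ is graded by decomposing $m\in\cM$ into homogeneous components and projecting the relation $sm+ms=0$ onto each homogeneous piece, which is exactly your "kernel of the graded map $L(x)=sx+xs$" argument (the degree shift being well defined because $G$ is abelian and $s$ is homogeneous). The only superfluous point in your write-up is the appeal to $\overline{s}=-s$ when checking that $L$ shifts degrees uniformly; that follows already from $\cA_g\cA_{g'}\subseteq\cA_{gg'}$ and plays no role.
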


\begin{proof}
In any algebra with involution $\cS$ and $\FF 1$ are graded subspaces. Hence, $\cK=\FF1\oplus \cS$ is a graded subspace.

Let $m\in \cM$ and let $a_g\in \cA_g$ be such that $m=\sum_{g\in G}a_g$. Let $g_0$ be the degree of $s$ and for every $g\in G$ denote by $\pi_g$ the projection on $\cA_g$ with respect to the decomposition given by the grading. Since $0=sm+ms$ and $0=\pi_{g}(sm+ms)=sa_{g}+a_{g}s$, we get that for every $g\in G$, $a_g\in \cM$. Therefore, $\cM$ is a graded subspace.
\end{proof}

Since $s^2=1$, it's easy to deduce using \eqref{eq:product} that for any $m\in \cM$ $s(sm)=m$. Therefore, we can define two subspaces of $\cM$:

\[\cM_{\sigma}=\{m\in\cM\mid sm=\sigma m\} \ \text{ for }\sigma=\pm\]

And $\cM=\cM_+\oplus\cM_-$.

\begin{lemma}\label{l:MpmGraded}
	For a $G$ grading $\Gamma$, $\deg(s)=e$ if and only if $\cM_+$ and $\cM_-$ are graded subspaces. 
\end{lemma}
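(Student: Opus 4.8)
The plan is to prove both implications of the biconditional by exploiting the definition $\cM_\sigma=\{m\in\cM\mid sm=\sigma m\}$ together with the grading axioms, focusing on how $\deg(s)$ interacts with the degrees of elements of $\cM$. Throughout I will write $g_0=\deg(s)$ and recall from Remark \ref{rm:GradedHV} that $g_0^2=e$, and from Lemma \ref{l:KandMGraded} that $\cM$ is a graded subspace, so every homogeneous component of a vector in $\cM$ again lies in $\cM$.

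First I would prove the easier direction, assuming $g_0=\deg(s)=e$. Here the key observation is that left multiplication $L_s\colon m\mapsto sm$ maps $\cM$ to $\cM$ and is a degree-preserving linear map, because $s$ is homogeneous of degree $e$ and the product respects the grading ($\cA_e\cA_g\subseteq\cA_g$). Since $L_s^2=\id$ on $\cM$ (from $s(sm)=m$), the spaces $\cM_+$ and $\cM_-$ are precisely the $+1$ and $-1$ eigenspaces of a grading-preserving involution. Any homogeneous $m\in\cM_g$ then satisfies $sm\in\cM_g$, so its projection onto either eigenspace stays homogeneous of degree $g$; concretely, $\tfrac12(m\pm sm)\in(\cM_\pm)_g$. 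This shows $\cM_\pm=\bigoplus_g\bigl(\cM_\pm\cap\cA_g\bigr)$, i.e. both are graded subspaces.

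For the converse, I would argue contrapositively: assume $g_0=\deg(s)\neq e$ and show that at least one of $\cM_+,\cM_-$ fails to be graded. Take any nonzero homogeneous $m\in\cM_g$ (such elements exist since $\cM$ is graded and nonzero). Then $sm$ is homogeneous of degree $g_0 g\neq g$, yet $sm=\pm m$ would force equality of degrees for the nonzero element $m$, a contradiction; hence $m\notin\cM_+$ and $m\notin\cM_-$. Now decompose $m=m_++m_-$ with $m_\pm\in\cM_\pm$. If both $\cM_+$ and $\cM_-$ were graded, each $m_\pm$ would itself be a sum of homogeneous pieces lying in $\cM_\pm$; but $m$ is homogeneous of degree $g$, so matching degrees forces $m_+$ and $m_-$ to be the degree-$g$ components, i.e. $m_\pm\in(\cM_\pm)_g$. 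Since $sm_\pm=\pm m_\pm$, both $m_+$ and $m_-$ are simultaneously of degree $g$ and (being eigenvectors of $L_s$) of degree $g_0 g$, forcing $m_+=m_-=0$ unless $g_0g=g$, i.e. $g_0=e$. This contradicts $g_0\neq e$ whenever $m\neq0$, so the gradedness of both $\cM_\pm$ fails.

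The main obstacle I anticipate is making the converse fully rigorous rather than merely suggestive: one must be careful to produce a genuinely nonzero homogeneous element of $\cM$ and to track that $L_s$ shifts degree by exactly $g_0$, so that an $L_s$-eigenvector cannot be homogeneous unless $g_0=e$. The cleanest formulation is to note that $L_s\colon\cM\to\cM$ sends $\cM_g$ into $\cM_{g_0g}$; an eigenvector of $L_s$ that is also homogeneous of degree $g$ must therefore satisfy $g_0g=g$, forcing $g_0=e$. Phrasing the whole argument around this single degree-shift property of $L_s$ handles both directions uniformly and sidesteps the bookkeeping of projections.
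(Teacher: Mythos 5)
Your proof is correct and takes essentially the same approach as the paper: the forward direction uses the identical projections $\tfrac12(m+\sigma sm)$ of a homogeneous element, and your contrapositive converse rests on the same key observation the paper applies directly, namely that left multiplication by $s$ shifts degrees by $\deg(s)$, so a homogeneous eigenvector of it forces $\deg(s)=e$. The only difference is presentational: the paper's converse is a one-line direct argument on a homogeneous element of $\cM_+$, while you reach the same conclusion by contradiction via the decomposition $m=m_++m_-$.
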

\begin{proof}
	Let $\deg(s)=e$. In this case, if $m\in \cM_g$, for some $g\in G$ then, there are $m_+\in\cM_+$ and $m_-\in\cM_-$ such that $m=m_++m_-$. Since $sm=m_+-m_-\in \cM_g$ we get that $m_\sigma\frac{1}{2}(m+(\sigma sm))\in\cM_g$ for $\sigma=\pm$. Hence, $\cM_g=(\cM_+\cap \cM_g)\oplus (\cM_-\cap \cM_g)$. From that is easy to check that $\cM_+$ and $\cM_-$ are graded. If $\cM_+$ and $\cM_-$ are graded, let $g=\deg(s)$. Then, let $m\in(\cM_+)$ for some $h\in G$ it should happen that $h=\deg(m)=\deg(sm)=gh$ and so $g=e$.
\end{proof}

\begin{remark}
	Notice that $\cM_+=\frac{1}{2}(1+s)\cM$ and $\cM_-=\frac{1}{2}(1-s)\cM$ so we are going to call $e_+=\frac{1}{2}(1+s)$ and $e_-=\frac{1}{2}(1-s)$.
\end{remark}

We denote as $b\colon \cM\times \cM\to \FF$ the bilinear form which satisfies $xy=b(x,y)1+\lambda s+m$ for $\lambda\in \FF$ and $m\in \cM$. 

\begin{lemma}\label{l:bilinearForm}
	For any $G$ grading on $\cA$, $b$ is a non-degenerate homogeneous bilinear form (i.e. $b(\cM_g,\cM_h)=0$ if and only if $gh=e$).
\end{lemma}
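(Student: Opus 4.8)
The plan is to split the statement into two logically independent parts: non-degeneracy of $b$ on $\cM$, which is a property of the algebra itself and uses no grading at all, and homogeneity, i.e. $b(\cM_g,\cM_h)=0$ whenever $gh\neq e$, which is the only place the grading enters.

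First I would dispose of non-degeneracy by reading it off the multiplication table \eqref{eq:product}. For $i\neq j$ the product $(fx_i)(gx_j)=\overline{fg}\,x_k$ lies in $\cM$ and contributes nothing to the $\FF1$-component, so the three planes $\cK x_1,\cK x_2,\cK x_3$ are mutually $b$-orthogonal. On a single plane $\cK x_i$ one has $(fx_i)(gx_i)=(f\overline{g})1\in\cK$, so $b(fx_i,gx_i)$ is simply the $1$-coordinate of $f\overline{g}$; evaluating on the basis $\{x_i,sx_i\}$ yields the non-degenerate form $\matr{1&0\\0&-1}$. An orthogonal sum of non-degenerate planes is non-degenerate, so $b$ is non-degenerate on $\cM$.

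For homogeneity the key observation is that $\FF1$, $\cS=\FF s$ and $\cM$ are each graded subspaces: $\FF1\subseteq\cA_e$ is trivially graded, $\cS$ is graded by Remark \ref{rm:GradedHV}, and $\cM$ is graded by Lemma \ref{l:KandMGraded}. Since $\cA=\FF1\oplus\FF s\oplus\cM$ is a direct sum of graded subspaces, the projection $p\colon\cA\to\FF1$ that kills $\FF s\oplus\cM$ is homogeneous of degree $e$, hence $p(\cA_g)=0$ for every $g\neq e$. Now given homogeneous $x\in\cM_g$ and $y\in\cM_h$, the product $xy$ lies in $\cA_{gh}$, while by the very definition of $b$ we have $p(xy)=b(x,y)1$. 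If $gh\neq e$ this projection vanishes, forcing $b(x,y)=0$; combined with non-degeneracy this shows that $\cM_g$ can pair non-trivially only with $\cM_{g^{-1}}$.

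The only delicate point — more a matter of care than a genuine obstacle — is that one needs $\FF1$ to be a graded subspace on its own, not merely $\cK$, in order to isolate the coefficient $b(x,y)$ by a graded projection. This is guaranteed by $1\in\cA_e$ together with the gradedness of $\cS$, which forces the complementary splitting $\cK=\FF1\oplus\FF s$ to be a splitting into graded subspaces. Everything else reduces to bookkeeping with \eqref{eq:product}.
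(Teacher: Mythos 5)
Your proof is correct and takes essentially the same route as the paper: homogeneity follows from the fact that $\FF 1$, $\cS$ and $\cM$ are all graded subspaces (so $\cA_{gh}$ has no $\FF 1$-component when $gh\neq e$, killing the coefficient $b(x,y)$ of $xy$), and non-degeneracy is a grading-free computation with \eqref{eq:product}. The only cosmetic difference is in the non-degeneracy check, where you compute the Gram matrix $\diag(1,-1)$ on each plane $\cK x_i$ and observe the planes are mutually orthogonal, while the paper instead exhibits, for each nonzero $m\in\cM$, an explicit element pairing nontrivially with it via a case analysis on whether the leading coefficient of $m$ is invertible in $\cK$; both amount to the same multiplication-table computation.
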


\begin{proof}
In order to show that $b$ is non degenerate, we take $m\in\cM$. then, there are $r_1,r_2,r_3\in \cK$ such that $m=r_1x_1+r_2x_2+r_3x_3$. Without loss of generality, we suppose that $r_1\neq 0$. Then, either $r_1= \beta e_{\sigma}$ for $\sigma=\pm$ or $r_1\overline{r_1}=\beta x_1$ in both cases with $\beta\neq 0$. In the first case $b(x, e_{-\sigma}x_1)=\beta$ and in the second case $b(x,r_1x_1)=\beta$.

In order to show you that it is homogeneous, we take $x\in\cM_g$ and $y\in \cM_h$. Then, $xy\in\cA_{gh}$. Suppose that $gh\neq e$. If $\deg(s)=gh$, then, $\cA_{gh}=\FF s\oplus \cM\cap \cA_{gh}$ and in other case $\cA_{gh}=\cM\cap \cA_{gh}$

\end{proof}

\begin{example}\label{ex:Gradings} Let $G$ be an abelian group, $i\in \FF$ such that $i^2=-1$ and $\zeta\in \FF$ a primitive cubic root of unit. 
	\begin{itemize}
		\item[(1)]For $g_1,g_2\in G$ denote by $\Gamma_{SQ}(G,g_1,g_2)$ the grading on $(\cA,\invol)$ given by $\deg(s)=e$, $\deg(e_+x_1)=g_1$, $\deg(e_+x_2)=g_2$, $\deg(e_+x_3)=(g_{1}g_2)^{-1}$, $\deg(e_-x_1)=g_1^{-1}$, $\deg(e_- x_2)=g_2^{-1}$ and $\deg(e_-x_3)=g_1g_2$.
		\item[(2)] For $g,g_1,g_2\in G$ with $g$ an element of order $2$, denote by $\Gamma_{SQ}(G,g,g_1,g_2)$ the grading given by $\deg(s)=g$, $deg(x_1)=g_1$, $deg(x_2)=g_2$, $deg(x_3)=g_1g_2$, $deg(sx_1)=gg_1$, $deg(sx_2)=gg_2$ and $deg(sx_3)=gg_1g_2$ 
		\item[(3)] For $\lambda\in\FF^{\times}$ and $h,g,f\in G$ such that $g^2=f^2=h^{-1}$ and $g\neq f$, we denote by $\Gamma_{S}(G,\lambda,h,g,f)$ the grading in which $\deg(s)=e$, $\deg(e_+x_1)=h=\deg(e_-x_1)^{-1}$, $\deg(e_+(x_2+\lambda x_3))=g$, $\deg(e_+(-\lambda^{-1}x_2+x_3))=f$
		\item[(4)] For $h,g\in G$ with $h$ of order $2$, we denote by $\Gamma^1_{S}(G,h,g)$ the grading induced by $\deg(s)=h$, $\deg(e_+x_2+e_-x_3)=g$ and $\deg(e_-x_2+e_+x_3)=g^{-1}$.
		\item[(5)] For  $h,g\in G$ such that $h$ has order $2$ and $g$ has order $4$ we denote by $\Gamma^2_{S}(G,h,g)$ the grading for which $\deg(s)=h$, $\deg(x_1)=g^2$ and $\deg(x_2+ix_3)=g$.
		\item[(6)] For $h,g,f\in G$ with $h,g$ and $f$ of order $2$, we denote by  $\Gamma^3_{S}(G,h,g,f)$  the grading for which $\deg(s)=h$ and $\deg(x_2+x_3)=g$ and $\deg(x_2-x_3)=f$.
		\item[(6)] For $g_1,g_2\in G$ of order $3$ and $g_1\neq g_2 \neq (g_1g_2)^{-1}$, we denote by $\Gamma(G,g_1,g_2)$ the grading given by $\deg(s)=e$, $\deg(e_+(x_1+\zeta x_2+\zeta^2 x_3))=g_1$, $\deg(e_+(x_1+\zeta^2x_2+\zeta x_3))=g_2$, $\deg(e_+(x_1+x_2+x_3))=(g_1g_2)^{-1}$
		\item[(7)] For $h,g_1\in G$ such that $h$ has order $2$ and $g$ has order $3$, we denote by $\Gamma(G,h,g)$ the grading given by $\deg(s)=h$ and $\deg(x_1+\zeta x_2+\zeta^2 x_3)=g$.
	\end{itemize} 
\end{example}

Given a $G$-grading $\Gamma \colon \cA=\bigoplus_{g\in G}\cA_g$ and a $H$-grading $\Gamma' \colon \cA=\bigoplus_{h\in H}\cA_h$, we say that the gradings are \textbf{compatible} if $\cA=\bigoplus_{(g,h)\in G\times H}\cA_g\cap \cA_h$.

\begin{proposition}\label{r:GradingsTypeI}
	If $\Gamma$ is a grading is compatible with $\Gamma_{SQ}$, then it is isomorphic to either $\Gamma_{SQ}(G,g_1,g_2)$ for some $g_1,g_2\in G$ as in example \ref{ex:Gradings}or to $\Gamma_{SQ}(G,g,g_1,g_2)$ for some $g,g_1,g_2$ as in example \ref{ex:Gradings}. 
\end{proposition}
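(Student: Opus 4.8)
The plan is to exploit the dichotomy coming from Remark \ref{rm:GradedHV}: since $\deg(s)^2=e$, either $\deg(s)=e$ or $\deg(s)$ has order $2$. Compatibility with $\Gamma_{SQ}$ means precisely that each homogeneous component $\cK$, $\cK x_1$, $\cK x_2$, $\cK x_3$ of the standard quartic grading is a graded subspace for $\Gamma$; this is the structural input I would use throughout, together with the product rules \eqref{eq:product} and the automorphisms produced in Lemma \ref{l:AutGrad}. The two cases will land on the two families $\Gamma_{SQ}(G,g_1,g_2)$ and $\Gamma_{SQ}(G,g,g_1,g_2)$ respectively.

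First I would treat $\deg(s)=e$. Here Lemma \ref{l:MpmGraded} gives that $\cM_+$ and $\cM_-$ are graded, so each one-dimensional space $\FF e_+x_i=\cK x_i\cap\cM_+$ and $\FF e_-x_i=\cK x_i\cap\cM_-$ is graded, i.e. every $e_\pm x_i$ is homogeneous. Setting $g_1=\deg(e_+x_1)$ and $g_2=\deg(e_+x_2)$, I would read off the remaining degrees directly from \eqref{eq:product}: since $(e_+x_1)(e_-x_1)=e_+\in\cK\subseteq\cA_e$ we get $\deg(e_-x_i)=\deg(e_+x_i)^{-1}$, and since $(e_+x_1)(e_+x_2)=e_-x_3$ we get $\deg(e_+x_3)=(g_1g_2)^{-1}$. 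A short check that all six relations are mutually consistent then shows $\Gamma$ equals $\Gamma_{SQ}(G,g_1,g_2)$ on the nose, hence is isomorphic to it via the identity.

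Next I would treat $\deg(s)=g$ of order $2$. Now left multiplication $L_s$ has nontrivial degree $g$ and is an involution, so no $\cK x_i$ can be homogeneous; being two-dimensional and graded, each splits as a sum of two one-dimensional components, of degrees $h_i$ and $gh_i$, interchanged by $L_s$. Writing a generator of the $h_i$-component as $v_i=\alpha_i e_+x_i+\beta_i e_-x_i$, the fact that $v_i$ and $sv_i=\alpha_i e_+x_i-\beta_i e_-x_i$ span $\cK x_i$ forces $\alpha_i,\beta_i\neq 0$. I would then normalise using an automorphism $\theta(r_1,r_2,\id)$ from Lemma \ref{l:AutGrad} with $r_i=a_ie_++a_i^{-1}e_-$, which rescales $e_\pm x_i\mapsto a_i^{\pm1}e_\pm x_i$; choosing $a_i$ with $a_i^2=\beta_i/\alpha_i$ (possible as $\FF$ is algebraically closed) sends $v_i$ to a multiple of $x_i=e_+x_i+e_-x_i$. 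Since this automorphism fixes $s$, it also sends $sv_i$ to a multiple of $sx_i$, so in the transformed grading $\deg(x_i)=h_i$ and $\deg(sx_i)=gh_i$, which is exactly $\Gamma_{SQ}(G,g,g_1,g_2)$ with $g_1=h_1$, $g_2=h_2$.

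The main obstacle is that the automorphism $\theta(r_1,r_2,\id)$ has only two free parameters, because $r_3=\overline{r_1r_2}$ forces $a_3=(a_1a_2)^{-1}$; thus I cannot impose $a_3^2=\beta_3/\alpha_3$ independently and must show this relation holds automatically. The way around it is to pin down the third component multiplicatively rather than by a free choice: taking $v_3:=v_1v_2$, a computation with \eqref{eq:product} gives $v_1v_2=\beta_1\beta_2\,e_+x_3+\alpha_1\alpha_2\,e_-x_3$, so $\alpha_3=\beta_1\beta_2$ and $\beta_3=\alpha_1\alpha_2$. Then $(a_1a_2)^{-2}=(\alpha_1/\beta_1)(\alpha_2/\beta_2)=\alpha_1\alpha_2/(\beta_1\beta_2)=\beta_3/\alpha_3$, so the constraint is satisfied and the single automorphism simultaneously straightens all three slots. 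Verifying this compatibility, and checking that $v_1v_2$ indeed generates one of the two graded lines of $\cK x_3$, is the only delicate point; the rest is bookkeeping with \eqref{eq:product}.
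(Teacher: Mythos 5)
Your proof is correct and takes essentially the same route as the paper's: the dichotomy on $\deg(s)$ (trivial versus order $2$) from Remark \ref{rm:GradedHV}, degree bookkeeping with \eqref{eq:product} in the first case, and straightening the split homogeneous lines of each $\cK x_i$ by an automorphism $\theta(r_1,r_2,\id)$ from Lemma \ref{l:AutGrad} in the second. Your explicit check that the constraint $r_3=\overline{r_1r_2}$ is automatically satisfied (by taking $v_3=v_1v_2$ and comparing coefficients) spells out carefully the point the paper handles by observing that $(r_1x_1)(r_2x_2)=\overline{r_1r_2}\,x_3$ is homogeneous and writing ``we can assume'' $r_3=\overline{r_1r_2}$.
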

\begin{proof}
	If $\Gamma\colon\cA=\bigoplus_{g\in G}\cA_g$ is such a grading  with $\deg(s)=e$. Since it is compatible with $\Gamma_{SQ}$, for every $i=1,2,3$ there should be a $g_i$ such that $\cA_{g_i}\cap \cK x_i= \cK x_i$ or ${g_i}$ and ${g_i}'$ such that $(\cA_{g_i}\cap\cK x_i)\oplus(\cA_{g'_i}\cap \cK x_i)$. In the first case, $\deg(e_+x_i)=\deg(e_-x_i)=g_i$. In the second case, since $s(\cA_{h}\cap \cK x_i)=(\cA_{h}\cap \cK x_i)$ for $h=g_i$ or $h=g'_i$, we can assume that $\cA_{g_i}\cap \cK x_i=e_+x_i$ and that $\cA_{g'_i}\cap \cK x_i=e_-x_i$. Since $(e_+x_i)(e_-x_i)=e_+$, we get that in both cases $\deg(e_+x_i)\deg(e_-x_i)=e$. Finally, since $(e_-x_1)(e_-x_2)=e_+x_3$, we get that $g_3=g_1^{-1}g_2^{-1}$. Hence $\Gamma=\Gamma_{SQ}(G,g_1,g_2)$.
	
	If $\Gamma\colon\cA=\bigoplus_{g\in G}\cA_g$ is such a grading  with $\deg(s)=g$ for $g$ an order $2$ element. Since each $\cK x_i$ are graded, if for $\sigma=\pm$, $e_{\sigma}x_i$ is homogeneous, $\deg(e_{\sigma}x_i)=\deg(s(e_{\sigma}x_i))=g\deg(e_{\sigma}x_i)$. Hence, for every $i=1,2,3$ there is a group element $g_i$ there is an invertible $r_i\in \cK$ such that $\deg(r_ix_i)=g_i$ since the field is algebraically closed, we can assume that $r_i\overline{r_i}=1$. Since $(r_ix_i)^2=1$, $g_i^2=e$. Moreover, since $(r_1x_1)(r_2x_2)=(\overline{r_1}\overline{r_2})x_3$, we can assupe that $r_3=\overline{r_1}\overline{r_2}$ and that $g_3=g_1g_2$. Hence, $\Gamma$ is isomorphic to  $\Gamma_{SQ}(G,g,g_1,g_2)$ via the morphism $\theta(r_1,r_2,Id)$ with the notation of $\ref{l:AutGrad}$. 
	
\end{proof}

\begin{proposition}
	If $\Gamma\colon\cA=\bigoplus_{g\in G}\cA_g$ is a grading compatible with $\Gamma^i_{S}$ for some $i=1,2,3$ but not with $\Gamma_{SQ}$ then either it is isomorphic to $\Gamma_{S}(G,\lambda,h,g,f)$ with elements as in example \ref{ex:Gradings} or it is isomorphic to $\Gamma^i_{S}(G,h,g)$ for $i=1,2$ with the notation as in example \ref{ex:Gradings}.
\end{proposition}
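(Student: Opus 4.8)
The plan is to first reduce to the index $i=1$ and then to split the analysis according to the degree of $s$, which by Remark~\ref{rm:GradedHV} satisfies $\deg(s)^2=e$ and so is either $e$ or an element of order two. Since the permutation automorphisms $f_\sigma$ of Remark~\ref{rm:PermutAut} carry each $\Gamma^i_S$ to $\Gamma^{\sigma(i)}_S$ while preserving $\Gamma_{SQ}$ (Example~\ref{ex:StructurableGrading}), after composing $\Gamma$ with a suitable $f_\sigma$ I may assume $i=1$. Compatibility with $\Gamma^1_S$ together with Lemma~\ref{l:KandMGraded} then gives that $\cK$, $\cM$, $\cK x_1$ and $\cK x_2\oplus\cK x_3$ are graded subspaces, while incompatibility with $\Gamma_{SQ}$ means that $\cK x_2$ and $\cK x_3$ are not both graded. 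These two alternatives for $\deg(s)$ are exactly what separates the target grading $\Gamma_S(G,\lambda,h,g,f)$, where $\deg(s)=e$, from $\Gamma^1_S,\Gamma^2_S$, where $\deg(s)$ has order two.

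Consider first $\deg(s)=e$. By Lemma~\ref{l:MpmGraded} the spaces $\cM_\pm$ are graded, hence so is the two-dimensional space $V:=e_+(\cK x_2\oplus\cK x_3)=\FF e_+x_2\oplus\FF e_+x_3$, and likewise $\FF e_+x_1$ and $\FF e_-x_1$ are graded, of mutually inverse degrees $h$ and $h^{-1}$ by Lemma~\ref{l:bilinearForm}. The product rules give $(e_+x_2)^2=(e_+x_3)^2=0$ and $(e_+x_2)(e_+x_3)+(e_+x_3)(e_+x_2)=2\,e_-x_1$, so squaring defines on $V$ a non-degenerate symmetric form valued in $\FF e_-x_1$ whose isotropic lines are precisely $\FF e_+x_2$ and $\FF e_+x_3$. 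I would then run through the possible homogeneous decompositions of $V$: if $V$ were homogeneous, or split into its two isotropic lines, then $\cK x_2$ and $\cK x_3$ would be graded, contradicting the hypothesis; and a splitting into one isotropic and one anisotropic line is impossible, since an isotropic vector pairs non-trivially with any anisotropic one and this would force the two degrees to coincide. Hence $V$ splits into two distinct anisotropic homogeneous lines, which are orthogonal (otherwise their product would sit in degree $gf=g^2$, forcing $g=f$), and can be written $\FF e_+(x_2+\lambda x_3)$ and $\FF e_+(-\lambda^{-1}x_2+x_3)$ with $\lambda\in\FF^\times$; with $g,f$ their degrees one gets $g^2=f^2=h^{-1}$ and $g\neq f$. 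As these lines together with $e_\pm x_1$ and $s$ generate $\cA$, the whole grading is determined, giving $\Gamma\cong\Gamma_S(G,\lambda,h,g,f)$.

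Now suppose $\deg(s)=h$ with $h^2=e\neq h$. The graded space $\cK x_1$ cannot have an $L_s$-eigenvector as a homogeneous element (its degree $d$ would satisfy $d=hd$), so it splits into two homogeneous lines interchanged by $m\mapsto sm$; after normalising by an automorphism $\theta(r,1,\id)$ of Lemma~\ref{l:AutGrad} I may take $x_1$ homogeneous with $\deg(x_1)=g_1$, where $g_1^2=e$ (from $x_1^2=1$) and $\deg(sx_1)=hg_1$. On $W:=\cK x_2\oplus\cK x_3$ the maps $L_s\colon m\mapsto sm$ and $L_{x_1}\colon m\mapsto x_1m$ are anticommuting involutions, homogeneous of degrees $h$ and $g_1$, so their product $J:=L_sL_{x_1}$ satisfies $J^2=-\id$ and is homogeneous of degree $hg_1$. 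The key dichotomy is whether $e\in\{g_1,hg_1\}$. If so, then after possibly passing to the other (suitably rescaled) homogeneous line of $\cK x_1$ I may assume $g_1=e$, so $L_{x_1}$ is degree-preserving and every homogeneous component of $W$ is an $L_{x_1}$-eigenline; this is the situation realised by the mixed generators $e_+x_2+e_-x_3$, and tracking degrees through the products recovers $\Gamma^1_S(G,h,g)$. If instead $e\notin\{g_1,hg_1\}$, then $g_1$ has order two and none of the eigenspace decompositions of $L_s$, $L_{x_1}$, $J$ is graded; a homogeneous $q=r_2x_2+r_3x_3$ must then have $r_2r_3\neq0$, so $q^2$ has a non-zero $\cK x_1$-component and $\deg(q)^2\in\{g_1,hg_1\}$ has order two, whence $\deg(q)$ has order four and $\Gamma\cong\Gamma^2_S(G,h,g)$.

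The comparatively routine part is the case $\deg(s)=e$, which reduces cleanly to the classification of gradings of a two-dimensional space carrying a non-degenerate symmetric form. The main obstacle is the order-two case: one must show that the two alternatives above are genuinely exhaustive and yield exactly the two normal forms. This requires handling simultaneously the grading, the two anticommuting involutions $L_s,L_{x_1}$, the product $\cK x_2\times\cK x_3\to\cK x_1$ and the form $b$ of Lemma~\ref{l:bilinearForm} — in particular ruling out spurious branches (such as a homogeneous element lying in a degenerate two-dimensional subspace like $\FF e_+x_2\oplus\FF e_-x_3$, which forces $e\in\{g_1,hg_1\}$ and collapses into the $\Gamma^1_S$ case) — and then normalising the resulting parameters by automorphisms from Lemma~\ref{l:AutGrad} so as to land precisely on $\Gamma^1_S$ or $\Gamma^2_S$.
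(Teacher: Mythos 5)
Your first case, $\deg(s)=e$, is correct and is essentially the paper's own argument in a cleaner packaging: recasting squares and products inside $e_+(\cK x_2\oplus\cK x_3)$ as a quadratic form whose isotropic lines are $\FF e_+x_2$ and $\FF e_+x_3$ is exactly what the paper does by hand when it rules out homogeneity of $e_+x_2$ and then forces orthogonality ($\beta=-\lambda^{-1}$) via the form $b$. The genuine gap is in the case where $\deg(s)=h$ has order two. Your dichotomy ``$e\in\{g_1,hg_1\}$ versus not'' does not separate $\Gamma^1_S$ from $\Gamma^2_S$, and the branch $g_1=e$ does not ``recover $\Gamma^1_S(G,h,g)$''. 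When $\deg(x_1)=e$, a homogeneous line of $W=\cK x_2\oplus\cK x_3$ is only constrained to lie in one of the two-dimensional $(\pm 1)$-eigenspaces of $L_{x_1}$; inside each eigenspace there is a whole pencil of lines, and only the isotropic ones, spanned by $e_+x_2+e_-x_3$ and $e_-x_2+e_+x_3$ (which square to $0$), lead to $\Gamma^1_S$. The anisotropic lines, spanned by $x_2\pm x_3$ (invertible $\cK$-coefficients, with $(x_2\pm x_3)^2=2\pm 2x_1\neq 0$), are equally admissible when $\deg(x_1)=e$, and they produce the grading $\Gamma^3_S(G,h,g,f)$ of Example \ref{ex:Gradings}: $\deg(s)=h$, $\deg(x_2+x_3)=g$, $\deg(x_2-x_3)=f$, $\deg(x_1)=e$.

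This third family is not reached by either of your branches and is in general not isomorphic to $\Gamma^1_S$ or $\Gamma^2_S$. Take $G=\ZZ_2^3$ with independent generators $h,g,f$: the grading $\Gamma^3_S(G,h,g,f)$ has six-element support with component dimensions $(2,2,1,1,1,1)$, whereas any $\Gamma^1_S(G,h,g')$ has four-element support with every component two-dimensional (since $g'^{-1}=g'$ in $\ZZ_2^3$), $\Gamma^2_S$ requires an element of order four, which $\ZZ_2^3$ lacks, and $\Gamma_S(G,\lambda,h',g',f')$ has $\deg(s)=e$, which is an isomorphism invariant; yet one checks directly that $\Gamma^3_S(G,h,g,f)$ is compatible with $\Gamma^1_S$ and not with $\Gamma_{SQ}$, so it satisfies the hypotheses. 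In other words, the statement as printed is itself missing a case, and your argument ``proves'' it only because the branch $g_1=e$ silently discards the anisotropic possibility. The paper's own proof does not make this mistake: its final sub-case ($r_2,r_3$ invertible and $\deg(x_1)=e$, forcing $\lambda=\pm 1$) explicitly ends with the conclusion that the grading is $\Gamma^3_S(G,h,g,f)$, so the proposition's conclusion must include this third alternative. To repair your proof, replace the dichotomy on $\deg(x_1)$ by the paper's dichotomy on the coefficients of a homogeneous element $q=r_2x_2+r_3x_3$: if $r_2,r_3$ cannot both be invertible, the degenerate coefficients force $\deg(x_1)\in\{e,h\}$ and one lands in $\Gamma^1_S$; if they can, then $q^2=(1+\lambda^2)1+2\lambda x_1$ (after normalising $r_2=1$, $r_3=\lambda 1$) gives $\Gamma^2_S$ when $\deg(x_1)\neq e$ (homogeneity kills the $\cK$-part, so $\lambda^2=-1$ and $\deg(q)$ has order four) and $\Gamma^3_S$ when $\deg(x_1)=e$ (where instead $\lambda=\pm 1$).
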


\begin{proof}
	Up to isomorphism we can suppose that it is compatible with $\Gamma_{S}^1$. Due to lemma \ref{l:KandMGraded}, $\cK x_1$ is a graded subspace.
	
	If $\deg(s)=e$, since $s\cK x_1=\cK x_1$, then $e_+x_1$ and $e_-x_2$ are homogeneous. Let $\deg(e_+x_1)=h$. Since $(e_+x_1)(e_-x_1)$, we get that $\deg(e_- x_1)=h^{-1}$. We are going to prove that $e_+ x_2$ cannot be homogeneous. We prove it by contradiction. If it is homogeneous of degree $g$, $(e_+x_2)(e_+x_1)=e_-x_3$ is homogeneous of degree $gh$. Necesarily, there should be a $\lambda\in\FF$ such that $e_+(\lambda x_2+x_3)$ is homogeneous of degree $f$. Necessarily $f\neq g$ otherwise $e_+x_3$ is homogeneous and multiplying by $e_+x_1$, we get that $e_-x_1$ is homogeneous and that means that the grading is compatible with $\Gamma_{SQ}$. Now, multiplying by $e_+x_1$ we get that $e_(x_2+\lambda x_3)$ is homogeneous of degree $fh$. Since $b(e_-(x_2+\lambda x_3),e_+x_2)=\frac{1}{2}$ and $b(e_-(x_2+\lambda x_3),e_+(\lambda e_2,e_3))=\lambda$ and since $\ker(b(e_-(x_2+\lambda x_3)),\ \cdot \ )_{\mid (\cK x_2\oplus \cK_{x_3})\cap \cM_+}$ has to be a graded subspace, then $\lambda=0$. Therefore, $e_+x_3$ is homogeneous and as we saw before, this leads to a contradiction with the fact that $\Gamma$ is not compatible with $\Gamma_{SQ}$.
	
	Due to the previous discussion, we can assume (because we can multiply by scalar) that there are $\lambda,\beta\in\FF^{\times}$ such that $e_+(x_2+\lambda x_3)$ is homogeneous of degree $g$ and $e_+(\beta x_2+ x_3))$ is homogeneous of degree $f$ and both are linearly independent. Multiplying by $e_+x_1$ you get that $e_-(\lambda x_2+x_3)$ is homogeneous of degree $gh$ and that $e_-(x_2+\beta x_3)$ is homogeneous of degree $fh$. Call $\varphi=b(e_-(\lambda x_2+x_3),\ \cdot \ )_{\mid (\cK x_2\oplus \cK_{x_3})\cap \cM_+}$. Since $\ker(\varphi$ has to be a graded subspace of $x_2\oplus \cK_{x_3})\cap \cM_+$ and $\varphi(e_+(x_2+\lambda x_3))=\lambda\neq 0$, necessarily, $0=\varphi(e_+(\beta x_2+x_3 ))=\frac{1}{2}(\lambda \beta +1)$. In order to see that $g^2=f^2=h^{-1}$, we see that the square of $(e_+(x_2+\lambda x_3))$ and of $e_+(-\lambda^{-1}x_2+x_3)$ are nonzero multiples of $e_-x_1$. Therefore, $\beta=-\lambda^{-1}$ and therefore, $\Gamma$ is isomorphic to $\Gamma_{S}(G,\lambda,h,g,f)$.
	
	If $\deg(s)=h$ for $h\neq e$, clearly $h^2=e$. By lemma \ref{l:MpmGraded} we know that there is an invertible $r_1\in \cK$ such that $r_1x_1$ is homogeneous of degree $f$. Using the automorphism $\theta(\frac{1}{\sqrt{r_1\overline{r_1}}}\overline{r_1},1,Id)$ we can assume that $r_1=1$. We are going to prove by contradiction that  for no $r_2\in\cK$, $r_2x_2$ is homogeneous. Suppose it is. If $r_2$ is multiple of $e_{\sigma}$ for some $\sigma=\pm$, then $h\deg(r_2x_2)=\deg(s(r_2x_2))=\deg(r_2x_2)$ which would be a contradiction.  If $r_2$ is invertible, since $s(r_2 x_2)=(sr_2)x_2$, $x_1(r_2x_2)=\overline{r_2}x_3$ and $(s\overline{r_2})x_3$ are homogeneous, $\Gamma$ would be compatible with $\Gamma_{SQ}$. Hence, there are $r_2,r_3\in\cK\setminus {0}$ such that $r_2x_2+r_3x_3$ is homogeneous of degree $g$, then multiplying by $x_1$ we get that $\overline{r_3}x_2+\overline{r_2}x_3$ is homogeneous of degree $gf$. 
	
	If there is no $r_2x_2+r_3x_3$ homogeneous with $r_2,r_3$ invertible, then for an homogeneous element like this, $(r_2x_2+r_3x_3)^2=2 \overline{r_2r_3}x_1=0$ since $r_2r_3\in \FF e_{\sigma}$ for some $\sigma=\pm$. We can suppose then that $r_2=\lambda e_{\sigma}$ and $r_3=\beta e_{-\sigma}$. Moreover, by scaling the element we can suppose that $\lambda=1$. Call $x=r_2x_2+r_3x_3$. Since $x_1x$ has is a linear combination of $x$ and $sx$ then, either $x_1x=x$ in which case $\beta=1$ and $\deg(x_1)=e$ or $x_1x=sx$, in which case $\beta=-1$ and $\deg(x_1)=\deg(s)$. Using $\theta(s,1,Id)$ if necessary, we can suppose that $\beta=1$ and $\deg(x_1)=e$. Hence, $x=e_+x_2+e_-x_3$ is homogeneous of degree $g$ and since there should be an homogeneous element which doesn't belong to $\mathrm{span}\{x,sx,x_1x,(sx_1)x\}=\mathrm{span}\{x,sx\}$, using the same arguments we see that $y=e_-x_2+e_+x_3$ is homogeneous. Since $xy=1+x_1$, we get that $b(x,y)\neq 0$ and so $y$ is homogeneous of degree $g^{-1}$. Hence, $\Gamma$ is isomorphic to $\Gamma^1_{S}(G,h,g)$
	
	Finally, assume that $r_2x_2+r_3x_3$ is homogeneous with $r_2,r_3$ invertible, using the automorphism $\theta(1,\frac{1}{\sqrt{r_2\overline{r_2}}}\overline{r_2})$ and multiplying by scalar we can assume that $r_2=1$. Since $(x_2+r_3x_3)^2=2\overline{r_3}x_1$ and it is homogeneous, we can assume that $r_3\in\FF 1\cup \FF s$. Using if necesary the automorphism $\theta(s,1,\id)$ we can suppose that $r_3=\lambda 1$ for some $\lambda\in\FF^{\times}$. If $\deg(x_1)\neq e$,  since $b(x,x_1x)\neq 0$ we get that $b(x,x_1x)=0$ and that means that $\lambda^2=-1$. Since in this case $-\lambda x_1x=x_2-\lambda x_3$, any choice of $\lambda$ would be an homogeneous element. Hence the grading is isomorphic to $\Gamma_{S}^2(G,h,g)$. Finally, if $\deg(x_1)=e$, $\lambda=\pm 1$. Since $\cK x+\cK(x_1x)=\FF x\oplus \FF sx$ we need to complete with another homogeneous element. By the same argument it has to be $y=x_2-x_3$ so the grading is $\Gamma_{S}^3(G,h,g,f)$ where $\deg(x_2+x_3)=g$.
\end{proof}

\begin{proposition}
Let $\Gamma\colon\cA=\bigoplus_{g\in G} \cA_g$ be a grading on $(\cA,\invol)$ which is not compatible with any $\Gamma^i_{S}$. Then it is isomorphic either to $\Gamma(G,g_1,g_2)$ for $g_1,g_2$ of order $3$ or to $\Gamma(G, h ,g)$ for $h$ of order $2$ and $g$ of order $3$.
\end{proposition}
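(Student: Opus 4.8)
The plan is to split on $\deg(s)$, which by Remark \ref{rm:GradedHV} is either $e$ or an element $h$ of order $2$. Before that I would restate the hypothesis: since $\cK$ and $\cM$ are graded (Lemma \ref{l:KandMGraded}) and $\cK\cap\cM=0$, a $G$-grading is compatible with $\Gamma^i_S$ exactly when $\cK x_i=(\cK\oplus\cK x_i)\cap\cM$ is a graded subspace (both implications use that $\cK$ and $\cM$ are graded). Hence the standing assumption is equivalent to: \emph{no} $\cK x_i$ is a graded subspace. The main computational device will be a cubic form: for $u=\sum_i\alpha_i(e_+x_i)\in\cM_+$ one checks from \eqref{eq:product} that $u\cdot u^2=6\,\alpha_1\alpha_2\alpha_3\,e_+$, and symmetrically on $\cM_-$. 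Writing $N(u)=\alpha_1\alpha_2\alpha_3$, the point is that $u\cdot u^2\in\cK$: so if $u$ is homogeneous of degree $d$, then $6N(u)e_+\in\cA_{d^3}\cap\cK$, which vanishes unless $d^3$ lies among the degrees carried by $\cK$ (namely $d^3=e$ when $\deg(s)=e$, and $d^3\in\{e,h\}$ when $\deg(s)=h$). This is the mechanism that forces the relevant group elements to have order $3$.

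For $\deg(s)=e$ the subspaces $\cM_+,\cM_-$ are graded (Lemma \ref{l:MpmGraded}) and $\cK=\cK_e$, so any homogeneous $u\in\cM_+$ of degree $d$ with $d^3\neq e$ has $N(u)=0$, i.e.\ $u$ lies in one of the three coordinate planes $\lspan{e_+x_j,e_+x_k}$. I would then examine the homogeneous decomposition of the $3$-dimensional space $\cM_+$, using that $\cM_-$ is the $b$-dual of $\cM_+$ (Lemma \ref{l:bilinearForm}) and that $\cM_+\cdot\cM_+\subseteq\cM_-$ is the non-degenerate ``cross product'' $(e_+x_i)(e_+x_j)=e_-x_k$. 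The possibilities that $\cM_+$ is a single homogeneous block, or a $2+1$ decomposition, or has a summand lying in a coordinate plane, are ruled out one at a time: in each the product structure together with the non-degeneracy of $b$ forces some coordinate line $\FF e_+x_i$ and its $b$-dual $\FF e_-x_i$ to be simultaneously homogeneous, i.e.\ $\cK x_i$ graded, against the hypothesis. What remains is $\cM_+=\FF p_1\oplus\FF p_2\oplus\FF p_3$ with three distinct degrees $a_1,a_2,a_3$, no $p_i$ in a coordinate plane; then $N(p_i)\neq0$ gives $a_i^3=e$, and since $p_ip_j$ is a nonzero multiple of the $b$-dual of $p_k$ we get $a_ia_j=a_k^{-1}$, hence $a_1a_2a_3=e$. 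Normalizing by a torus element $\theta(r_1,r_2,\id)$ and a permutation $f_\sigma$ (Remark \ref{rm:PermutAut}, Lemma \ref{l:AutGrad}) makes the $p_i$ into the cyclic eigenvectors $e_+(x_1+\zeta x_2+\zeta^2x_3)$, $e_+(x_1+\zeta^2x_2+\zeta x_3)$, $e_+(x_1+x_2+x_3)$, giving $\Gamma\cong\Gamma(G,g_1,g_2)$ as in Example \ref{ex:Gradings}.

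For $\deg(s)=h$ of order $2$ the spaces $\cM_\pm$ are no longer graded, so I would work directly inside the $6$-dimensional graded space $\cM$ with $\cK=\cK_e\oplus\cK_h$; now the cubing argument permits $d^3\in\{e,h\}$. First I would show that no nonzero $rx_i$ can be homogeneous: an $s$-eigenvector $e_\pm x_i$ would need $\deg=h\deg$, which is impossible; and an invertible $r\in\cK^\times$ gives $c^2\neq d^2$ for $r=c+ds$, so $rx_i$ and $(sr)x_i$ are independent homogeneous elements spanning $\cK x_i$, making $\cK x_i$ graded. Thus every homogeneous element of $\cM$ is genuinely mixed. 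Choosing such an $m$ and iterating the squaring map $m\mapsto\tfrac12m^2$, which on cyclic eigenvectors acts by $w^2=2w'$ with $w=x_1+\zeta x_2+\zeta^2x_3$ and $w'=x_1+\zeta^2x_2+\zeta x_3$, I would produce a homogeneous $\cK$-basis of $\cM$ consisting of eigenvectors of the order-$3$ cyclic map. Tracking degrees through $w^2=2w'$, $w'^2=2w$ and $ww'=3\cdot1-(x_1+x_2+x_3)$, and normalizing with $\theta(r_1,r_2,\id)$ and $f_\sigma$, forces $\deg(w)=g$ of order $3$, whence $\Gamma\cong\Gamma(G,h,g)$.

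The main obstacle is the bookkeeping in these two case splits: concretely, excluding the coordinate-aligned decompositions of $\cM_+$ when $\deg(s)=e$ (the single block, the $2+1$ block, and a summand in a coordinate plane), and, when $\deg(s)=h$, manufacturing the order-$3$ eigenstructure without $\cM_\pm$ being graded. In each of these steps the decisive input is the same pairing of ingredients: the vanishing of the cubic $N$ is dictated by the order of the degree through $u\cdot u^2\in\cK$, while the non-degenerate homogeneous form $b$ ties $\cM_+$ to $\cM_-$; together they forbid any partial alignment with the $\Gamma_{SQ}$-coordinates, leaving only the fully cyclic gradings. Morally, this says that an abelian group of automorphisms avoiding every $\Gamma^i_S$ must project onto the order-$3$ subgroup of the $S_3$-factor of $\Aut(\cA,\invol)$, whose common eigenvectors are exactly the cyclic ones.
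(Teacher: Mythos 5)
Your proposal is correct and follows essentially the same route as the paper's own proof: the same case split on $\deg(s)$, the same cubing mechanism to force order-$3$ degrees (your $u\cdot u^2=6N(u)e_+$ is the paper's $(x^2)^2=4x$ together with $b(x,x^2)=6\lambda_1\lambda_2\lambda_3\neq 0$ in cleaner packaging), the same use of the homogeneous non-degenerate form $b$ to rule out coordinate-aligned homogeneous elements (equivalently, a graded $\cK x_i$), and the same normalization by $\theta(r_1,r_2,\id)$ and $f_\sigma$ to reach the cyclic eigenvectors. The case-by-case exclusions you defer as ``bookkeeping'' are precisely the computations that occupy the bulk of the paper's argument, and each deferred claim is true and provable with exactly the tools you name.
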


\begin{proof}
	If $\deg(s)=e$ we start by proving that $e_{\sigma}x_i$ cannot be homogeneous. Since we can use the automorphisms $f_{\tau}$ and $\theta(1,1,\ex)$, we can prove it for $i=1$ and $\sigma=+$. In this case, $\ker(b(e_+x_1,\ \cdot \ ))\cap\cM_{-}=\FF e_-x_2\oplus \FF e_-x_3$ is homogeneous. Then, since $b$ is non degenerate, there should be an homogeneous element of degree $g$, $x=e_-(x_1+\lambda_2x_2+\lambda_3 x_3)$ with $\lambda_2,\lambda_3\in \FF$. Since $b(e_+x_1,x)$ and $b(x^2,x)$ are not $0$, it follows that $e_+x_1$ and $x^2$ have the same degree. Since $y=\frac{1}{2}x^2-\lambda_2\lambda_3e_+x_1=e_+(\lambda_3x_2+\lambda_2x_3)$, if $\lambda_2\lambda_3\neq 0$ then, since $y^2$ is homogeneous, then $e_-x_1$ is homogeneous and then $\ker(b(e_+x_1,\ \cdot \ ))\cap\ker(b(e_-x_1,\ \cdot \ ))=\cK x_2\oplus \cK x_3$ is graded and because of that this grading is compatible with $\Gamma_{S}^1$. If $\lambda_2\neq 0$ but $\lambda_3= 0$. Since $x^2(e_+x_1)$ is homogeneous, $e_-x_2$ is homogeneous. Hence $z=x(e_-x_2)=e_+(\lambda_3x_1+x_3)$ is homogeneous. Since $b(x,z)=\lambda_3\neq 0$ it follows that $z$ and $e_+x_1$ have the same degree and so $z-\lambda_3 e_+x_1=e_+x_3$ is homogeneous. Therefore, $(e_+x_2)(e_+x_3)=e_-x_1$ is homogeneous and it follows as before that it is not compatible with $\Gamma_{S}^1$. If $\lambda_2=\lambda_3=0$ we have it because of the same argument.
	
	Let $x=e_+(\lambda_1x_1+\lambda_2x_2+\lambda_3x_3)$ be an homogeneous element of degree $g$. It follows that $\lambda_1\lambda_2\lambda_3\neq 0$. Hence, by scalar multiplication we can assume that $\lambda_1\lambda_2\lambda_3=1$. Take another homogeneous element $y=e_+(\beta_1x_1+\beta_2x_2+\beta_3x_3)$ of degree $h$ with $\beta_1\beta_2\beta_3= 1$ such that $g\neq h$ (which should exist since the grading is not compatible with $\Gamma_{S}^1$). We can check that $(x^2)^2=4x$ and $(y^2)^2=4y$. That means that $g^3=e$ and $h^3=e$. Moreover, its easy to see that $b(x,x^2)=6 \lambda_1\lambda_2\lambda_3\neq 0$ and $b(y,y^2)=6\beta_1\beta_2\beta_3$. And because $h^2g\neq e$ we deduce that $b(x,y^2)=b(y, x^2)=0$. That implies that \begin{equation}\label{eq:lambdaBeta}
		\lambda_1\lambda_2\beta_3+\lambda_2\lambda_3\beta_1+ \lambda_3\lambda_1\beta_2=\beta_1\beta_2\lambda_1+\beta_2\beta_3\lambda_1+\beta_3\beta_1\lambda_2=0
	\end{equation} 
Moreover, since $hg\neq g^{2}$ and $gh\neq h^2$, we deduce that $xy\neq x^2$ and $xy\neq y^2$. Since $xy=e_+[(\lambda_2\beta_3+\lambda_3\beta_2)x_1+ (\lambda_1\beta_3+\lambda_3\beta_1)x_2+(\lambda_2\beta_1+\lambda_1\beta_2)x_3]$ using \eqref{eq:lambdaBeta} we see that $xy=e_+(-\lambda_2\lambda_3\beta_1\lambda_1^{-1}x_1-\lambda_1\lambda_3\beta_2\lambda_2^{-1}x_1-\lambda_1\lambda_2\beta_3\lambda_3^{-1}x_3)$. Therefore, if we call $z'=-xy$ we can see that it's coefficients products equals to $1$. Hence, for $z=\frac{1}{2}{z'}^2 $ we get that $z^2=2z`$ and $(z^2)^2=4z$. And we can check that the map sending $x\mapsto \deg(e_+(x_1+\zeta x_2+\zeta^2 x_3))=g_1$, $y\mapsto\deg(e_+(x_1+\zeta^2x_2+\zeta x_3))=g_2$ and  $z\mapsto\deg(e_+(x_1+x_2+x_3))=(g_1g_2)^{-1}$ is an isomorphism and so the grading is isomorphic to $\Gamma(G,g,h)$.

If $\deg(s)=h$, as before, $\cK x_i$ cannot be a graded subspace.

If all the homogeneous elements  $x=r_1x_1+r_2x_2+r_3x_3$ such that $r_1,r_2$ and $r_3$ are non zero, then, the projection of $x^2$ in $\cM$ is $y=\overline{r_2r_3}x_1+\overline{r_1r_3}x_3+\overline{r_1r_2}x_3$ which is homogeneous. If $r_1$ and $r_2$ are not invertible,  then this is in $\cM_{\sigma}$ for $\sigma=\pm$ and it would happen that $\deg(y)=\deg(sy)$ which can't happen unless $r_3=0$. If $r_1$ is not invertible but $r_2$ and $r_3$ are invertible, we use $y$ to show a contradiction. Hence $r_1,r_2$ and $r_3$ are invertible. Using the map $\theta(\frac{1}{\sqrt{r_1\overline{r_1}}}\overline{r_1},\frac{1}{\sqrt{r_2\overline{r_2}}}\overline{r_2})$ we can assume that $r_1$ and $r_2$ are scalars. If $\deg(x)=g$, since $(x^2)^2=(r_1r_2r_3)x$, we can assume that either $r_3\in\FF s$ or $r_3\in \FF 1$. Since we can scale we can suppose that $r_1r_2 r_3=1$ or $r_1r_2r_3=s$. In the first case and in the second case $g^3=e$ $g^3h=e$ if $g^3h=e$ we can multiply by $s$ and use the automorphism $\theta(s,s,\id)$ and we are in the first case. Now, either there is an element like this whose degree has order $3$ or there are $3$ linearly independent elements whose degree is $3$. In the second case, necessarily, since $\cK$ has dimension $2$, there must be an element of degree $3$ such that $r_1,r_2$ or $r_3$ is $0$ so we don't consider it here. Now, since the projections of  $x,x^2, x^2x,sx,s(x^2x)$ on $\cM$ span $\cM$, necessarily, $b(x,x^2)=6r_1r_2r_3\neq 0$ and that implies $r_3\in\FF 1$. Now, up to scalar, we can suppose that $r_1r_2r_3=1$ and we can check that the map sending $x\to x_1+\zeta x_2+\zeta x_3$ induces an isomorphism of algebras. Therefore, $\Gamma$ is isomorphic to $\Gamma(G,h,g)$.

Finally, we will show that these are all the possibilities. Indeed, if there is an homogeneous element $x=r_1x_1+r_2x_2$ of degree g for $r_1$ and $r_2$ different from $0$, since $x^2=2\overline{r_1r_2}x_3$ necessarily, we get that $r_1r_2=0$. Hence, we can suppose that there is $\lambda_1,\lambda_2\in\FF$ such that $x=\lambda_1e_+x_1+\lambda_2e_-x_2$. Moreover, $sx=\lambda_1e_+x_1-\lambda_2e_-x_2$ is also homogeneous of degree $gh$. We can show that all homogeneous elements $y=t_1x_1+t_2x_2+t_3x_3$ with $t_1,t_2,t_3\in\cK$ have $t_1,t_2$ or $t_3$ equal to $0$. Otherwise, $xy=e_-\overline{t_2}+e_+\overline{t_3}+(e_+\overline{t_3})x_1+(e_- \overline{t_3})x_2+(e_-\overline{t_2}+e_+\overline{t_3})x_3$ so either $y$ or $xy$ has coefficients which are not invertible and arguing as before, this is impossible. Hence, all the homogeneous elements in $\cM$ should be of the form $\lambda_ie_{\sigma}x_i\pm\lambda_j e_{-\sigma}x_j$ for $i\neq j$ and $\lambda_i,\lambda_j\in \FF$. We can finally show, that if $x=\lambda_1e_+x_1+\lambda_2e_-x_2$ is homogeneous, there should be $\beta_2,\beta_3\in\FF^{\times}$ such that $y=\beta_2e_++\beta_3e_-$ is homogeneous. But since $xy=\lambda_2\beta_2 e_-+\lambda_1\beta_2 e_-x_3+\lambda_2\beta_3 e_+x_1$ and that would imply that $e_-$ is homogeneous since $\cK$ and $\cM$ are homogeneous subspaces. But this would be a contradiction with the fact that $\deg(s)\neq \deg(1)$.
\end{proof}


\begin{thebibliography}{KMRT98}
		\bibitem[Ali78]{AliStr} 	B.N.~Allison, \emph{A class of nonassociative algebras with involution containing the class of Jordan algebras}, 	Math. Ann. \textbf{237} (1978), 133--156.
		
			\bibitem[Ali91]{AliD4} 	B.N.~Allison, \emph{Construction of 3x3 matrix Lie algebra of type D4}, 	Journal of Algebra \textbf{143} (1991), 63--92.
		
		\bibitem[Ali90]{AliSkew1} 	B.N.~Allison, \emph{Simple structurable algebras of skew-dimension 1}, 
		Comm. Algebra \textbf{18} (1990), 1245--1279.
		
		\bibitem[AF84]{AliFauCD} 	B.N.~Allison and J.R.~Faulkner, \emph{A Cayley-Dickson process for a class of structurable algebras}, 
		Trans. Amer. Math. Soc. \textbf{283} (1984), 185--210.
		
	\bibitem[AC20]{AC20} D. Aranda-Orna and A.S. Cordova-martinez \emph{Gradings on tensor products of compositon algebras and on the Smirnov algebra },  Linear algebra and it's applications \textbf{548} (2020), 1--36.
		
		
		\bibitem[EK13]{EKmon}  A.~Elduque and M.~Kochetov, \emph{Gradings
			on simple Lie algebras}, Mathematical Surveys and Monographs
		\textbf{189}, American Mathematical Society, Providence, RI;
		Atlantic Association for Research in the Mathematical Sciences
		(AARMS), Halifax, NS, 2013.
		
		
		
		
		
		
		
		
		
		
		
		
		
		
		
		
	\end{thebibliography}
\end{document}